\documentclass[10pt]{article}
\usepackage[margin=0.8in]{geometry} 
\usepackage{amsmath,amsthm,amssymb}
\usepackage{enumitem}
\usepackage{soul}
\usepackage{mathtools}
\usepackage[utf8]{inputenc}
\usepackage{multirow}
\usepackage[colorlinks]{hyperref}
\usepackage{cleveref}
\usepackage{bbm}
\usepackage{kotex}
\usepackage{tikz-cd}
\usepackage{adjustbox}
\usepackage{algorithm, algpseudocode}
\usepackage{listings}
\usepackage{float}
\usepackage{mleftright}
\allowdisplaybreaks

\numberwithin{equation}{section}

\newcommand{\bfDelta}{\boldsymbol{\Delta}}

\newcommand{\bftheta}{\boldsymbol{\theta}}
\newcommand{\bfTheta}{\boldsymbol{\Theta}}

\newcommand{\bfomega}{\boldsymbol{\omega}}

\newcommand{\bfzero}{\mathbf{0}}

\newcommand{\bsI}{\boldsymbol{I}}

\newcommand{\bsM}{\boldsymbol{M}}

\newcommand{\bsu}{\boldsymbol{u}}

\newcommand{\bsv}{\boldsymbol{v}}

\newcommand{\bsx}{\boldsymbol{x}}
\newcommand{\bsX}{\boldsymbol{X}}

\newcommand{\bsZ}{\boldsymbol{Z}}

\newcommand{\bbB}{\mathbb{B}}

\newcommand{\bbE}{\mathbb{E}}

\newcommand{\bbN}{\mathbb{N}}

\newcommand{\bbP}{\mathbb{P}}

\newcommand{\bbR}{\mathbb{R}}

\newcommand{\bbX}{\mathbb{X}}

\newcommand{\calA}{\mathcal{A}}
\newcommand{\calB}{\mathcal{B}}
\newcommand{\calC}{\mathcal{C}}
\newcommand{\calD}{\mathcal{D}}

\newcommand{\calG}{\mathcal{G}}

\newcommand{\calI}{\mathcal{I}}
\newcommand{\calJ}{\mathcal{J}}
\newcommand{\calK}{\mathcal{K}}
\newcommand{\calL}{\mathcal{L}}

\newcommand{\calN}{\mathcal{N}}
\newcommand{\calO}{\mathcal{O}}

\newcommand{\calS}{\mathcal{S}}

\newcommand{\calX}{\mathcal{X}}

\newcommand{\sfH}{\textnormal{\textsf{H}}}

\newtheorem{thm}{Theorem}

\newtheorem{lemma}{Lemma}
\newtheorem{rmk}{Remark}

\newcommand{\RNum}[1]{\uppercase\expandafter{\romannumeral #1\relax}}

\begin{document}
 
\title{Dimension-free comparison inequalities between $f$-divergences for Gaussian location mixtures}

\author{Jeonghwan Lee\footnote{E-mail: \href{mailto:jhlee97@uchicago.edu}{\texttt{jhlee97@uchicago.edu}}.} \ and Cong Ma\footnote{E-mail: \href{mailto:congm@uchicago.edu}{\texttt{congm@uchicago.edu}}.}}

\date{
    Department of Statistics, University of Chicago \\
    \ \\
    \today
}

\maketitle

\begin{abstract}
We establish dimension-free comparison inequalities between the Kullback-Leibler (KL) and $\chi^2$-divergences and the squared Hellinger distance for Gaussian location mixtures with compactly supported mixing distributions. We show that both divergences are bounded by the squared Hellinger distance up to constants depending only on the radius of the support, independently of the ambient dimension and the number of mixture components. These results resolve an open question raised by \cite{jia2023entropic} and strengthen existing comparison inequalities. Our proof relies on a Gaussian interpolation argument that relates the derivatives of these divergences to differences between posterior means.
\end{abstract}

\section{Introduction}
\label{sec:introduction}

We establish dimension-free comparison inequalities between the Kullback-Leibler (KL) divergence, the $\chi^2$-divergence, and the squared Hellinger distance for Gaussian location mixtures. To formally state our results, let $\bbP_{\bftheta} := \calN_d \left( \bftheta, \bsI_d \right)$ denote the $d$-dimensional Gaussian distribution with mean vector $\bftheta \in \bbR^d$ and the identity covariance matrix. Given any probability measure $\pi$ supported on the Euclidean ball $\bbB_{2}^d (M) := \left\{ \bftheta \in \bbR^d : \left\| \bftheta \right\|_2 \leq M \right\}$, we define the Gaussian location mixture
\[
    \bbP_\pi(\cdot)
    := \int_{\bbB_2^d(M)}
    \bbP_{\bftheta}(\cdot)\,\pi(\mathrm{d}\bftheta).
\]
Our main result shows that the KL and $\chi^2$-divergences between two such mixtures can be upper bounded by their squared Hellinger distance, with constants depending only on $M$, independently of the ambient dimension.


\begin{thm}
\label{thm:dimension_free_comparison}
For any mixing distributions $\pi$ and $\eta$ supported on $\bbB_{2}^d (M)$,
\begin{align}
    \textnormal{\textsf{KL}} \left( \bbP_{\pi} \left\| \bbP_{\eta} \right. \right) + \textnormal{\textsf{KL}} \left( \left. \bbP_{\eta} \right\| \bbP_{\pi} \right) \leq \ & \left( 2 M^2 + 4 \right) \sfH^2 \left( \bbP_{\pi}, \bbP_{\eta} \right),
    \label{eqn:thm:dimension_free_comparison_KL_H} \\
    \max \left\{ \chi^2 \left( \bbP_{\pi} \left\| \bbP_{\eta} \right. \right), \chi^2 \left( \left. \bbP_{\eta} \right\| \bbP_{\pi} \right) \right\} \leq \ & \left\{ 18 \exp \left( 4 M^2 \right) - 2 \right\} \sfH^2 \left( \bbP_{\pi}, \bbP_{\eta} \right).
    \label{eqn:thm:dimension_free_comparison_chi_H}
\end{align}
\end{thm}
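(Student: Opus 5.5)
The plan is to interpolate between the two mixtures along the Gaussian (heat) semigroup / signal-to-noise direction and to express every divergence through posterior means. For $t \in [0,1]$ consider the observation model $\bsX_t = \sqrt{t}\,\bftheta + \bsZ$, or equivalently the mixtures $\bbP_{\pi}^{(t)}$ with mixing law the pushforward of $\pi$ under $\bftheta \mapsto \sqrt{t}\bftheta$. At $t=0$ the two mixtures coincide. At $t=1$ we recover $\bbP_\pi,\bbP_\eta$. The guiding fact is the de Bruijn / I-MMSE type identity
\[
\frac{\mathrm{d}}{\mathrm{d}t}\Big[\textsf{KL}(\bbP^{(t)}_\pi\|\bbP^{(t)}_\eta)+\textsf{KL}(\bbP^{(t)}_\eta\|\bbP^{(t)}_\pi)\Big] = \tfrac12\,\bbE_{\bbP^{(t)}_\pi+\bbP^{(t)}_\eta}\big\|\widehat{\bftheta}_\pi(\bsX_t)-\widehat{\bftheta}_\eta(\bsX_t)\big\|_2^2
\]
up to the precise normalization, where $\widehat{\bftheta}_\pi(\bsx)=\bbE_\pi[\bftheta\mid \bsX_t=\bsx]$ is the posterior mean (Tweedie: $\sqrt t\,\widehat{\bftheta}_\pi=\bsx+\nabla\log p^{(t)}_\pi$). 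I would verify this by differentiating the densities using the heat equation and integrating by parts, so that the symmetric KL derivative becomes a Fisher-information-like quantity $\int (p_\pi+p_\eta)\|\nabla\log(p_\pi/p_\eta)\|^2$. The key point is that this depends only on the difference of posterior means, which are bounded by $M$ in norm, with no $d$ or number of components entering.

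The second step is to bound this posterior-mean discrepancy by the Hellinger distance. Writing $\nabla\log(p_\pi/p_\eta)=\sqrt t(\widehat{\bftheta}_\pi-\widehat{\bftheta}_\eta)$, I would use the relation $\nabla\sqrt{p_\pi/p_\eta}$ and the fact that $\sqrt{p_\pi}-\sqrt{p_\eta}$ controls the gradient discrepancy via a Gaussian integration by parts / Poincaré-type argument. Concretely, I would aim for a bound of the form
\[
\int (p_\pi+p_\eta)\|\nabla \log(p_\pi/p_\eta)\|^2 \lesssim (M^2+1)\int(\sqrt{p_\pi}-\sqrt{p_\eta})^2.
\]
To get it, I would write $\nabla\log(p_\pi/p_\eta)=2\nabla(\sqrt{p_\pi}-\sqrt{p_\eta})$ times suitable weights and integrate against the pointwise bound $\|\widehat{\bftheta}\|\le M$. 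Alternatively, I would combine monotonicity in $t$ of $\sfH^2(\bbP^{(t)}_\pi,\bbP^{(t)}_\eta)$, which holds by data processing since $\bsX_s$ is a noisy version of $\bsX_t$ for $s<t$, with a matching derivative identity for $\sfH^2$. That identity reads $\frac{\mathrm{d}}{\mathrm{d}t}\sfH^2 \asymp \int \sqrt{p_\pi p_\eta}\,\|\nabla\log(p_\pi/p_\eta)\|^2$. Then the KL derivative is compared pointwise with the Hellinger derivative, using the elementary inequality $(a+b)\le C\sqrt{ab}$ wherever the likelihood ratio is bounded. The ratio $p_\pi/p_\eta$ at $\bsx$ lies in $[e^{-2M\|\bsx\|-M^2},\,e^{2M\|\bsx\|+M^2}]$, which is not uniformly bounded, so the truncation/tail handling here is where the constant $2M^2+4$ has to be earned. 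Integrating in $t$ from $0$ to $1$ then gives \eqref{eqn:thm:dimension_free_comparison_KL_H}.

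For \eqref{eqn:thm:dimension_free_comparison_chi_H}, I would run the same interpolation for $\chi^2(\bbP^{(t)}_\pi\|\bbP^{(t)}_\eta)$. Its derivative is $\int p_\eta \|\nabla (p_\pi/p_\eta)\|^2$-type, which equals $\int \frac{p_\pi^2}{p_\eta} t\|\widehat{\bftheta}_\pi-\widehat{\bftheta}_\eta\|^2$. Here one needs the moment bound $\bbE_{\bbP_\eta}[(p_\pi/p_\eta)^k]\le e^{cM^2}$. This is dimension-free because $p_\pi/p_\eta$ is a ratio of mixtures of $e^{\langle\bftheta,\bsx\rangle-\|\bftheta\|^2/2}$, and Jensen plus the Gaussian MGF give $\exp(\mathrm{const}\cdot M^2)$. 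Combining with Cauchy–Schwarz against the Hellinger-type derivative yields the $18e^{4M^2}-2$ constant. The main obstacle throughout is the step converting posterior-mean differences into Hellinger (rather than KL) control without losing dimension, since the naive likelihood-ratio bounds depend on $\|\bsx\|\sim\sqrt d$. I would first try to avoid pointwise ratio bounds entirely and work with the exact identity for $\frac{\mathrm{d}}{\mathrm{d}t}\sfH^2$, exploiting that every quantity is expressed through posterior means bounded by $M$.
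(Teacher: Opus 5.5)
Your setup matches the paper's: interpolating the mixing vector (your $\sqrt t\,\bftheta$ is a reparametrization of the paper's $t\bftheta$), expressing every divergence derivative through $\bfDelta_t$ with $\|\bfDelta_t\|_2\le 2M$, and using monotonicity of $D(t)=\sfH^2(p_t,q_t)$, for which your data-processing argument is fine. But the decisive step is left open: converting $\int(p_t+q_t)\|\bfDelta_t\|_2^2$ into Hellinger control without dimension dependence. The routes you sketch do not close it.

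\begin{itemize}
\item Truncating according to $\rho_t(\bsx)\in[e^{-2M\|\bsx\|_2-M^2},e^{2M\|\bsx\|_2+M^2}]$ is dimension-dependent, since $\|\bsx\|_2\approx\sqrt d$ on the bulk of the mass.
\item Your fixed-time target $\int(p_t+q_t)\|\nabla\log\rho_t\|_2^2\lesssim(M^2+1)\int(\sqrt{p_t}-\sqrt{q_t})^2$ would force a bound of $\int\sqrt{p_tq_t}\,\|\nabla\log\rho_t\|_2^2$ by a multiple of $\sfH^2(p_t,q_t)$ at the same time. That is a reverse-type inequality; you give no argument for it, and it is not needed.
\end{itemize}

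The missing idea is the exact pointwise split
\[
p_t+q_t=\bigl(\sqrt{p_t}-\sqrt{q_t}\bigr)^2+2\sqrt{p_tq_t}.
\]
In the parametrization $t\bftheta$, apply $\|\bfDelta_t\|_2^2\le 4M^2$ only to the first piece. It then contributes at most $4M^2tD(t)$ to $J'(t)=t\int(p_t+q_t)\|\bfDelta_t\|_2^2$, while the second piece is exactly $4D'(t)$. Hence $J'(t)\le 4M^2tD(t)+4D'(t)$. With $D(t)\le D(1)$ on $[0,1]$, integration gives $J(1)\le(2M^2+4)D(1)$, and no pointwise likelihood-ratio bounds enter. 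Your closing instinct to use the exact $\sfH^2$-derivative identity was right; this split is what makes it work. Truncating on the level of $\rho_t$ itself, i.e.\ $\{K^{-1}\le\rho_t\le K\}$ versus its complement where $p_t+q_t\lesssim(\sqrt{p_t}-\sqrt{q_t})^2$, would also be dimension-free, but only with worse constants.

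The $\chi^2$ part has a more serious gap. Dimension-free moment bounds $\bbE_{q_t}[\rho_t^k]\le e^{c_kM^2}$ do hold, but combining them with Cauchy--Schwarz or H\"older against the Hellinger derivative gives only sublinear control. For instance, writing $p_t^2/q_t=\sqrt{p_tq_t}\,\rho_t^{3/2}$ and applying Cauchy--Schwarz bounds $C'(t)$ by a multiple of $\sqrt{t\,D'(t)}$. That integrates to $C(1)\lesssim\sqrt{D(1)}$, not a linear comparison; moment-plus-H\"older or truncation arguments lose a power or a logarithm of $\sfH^2$ in exactly this way. So the constant $18e^{4M^2}-2$ is asserted, not derived.

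The paper instead works with $G=C+2D$, whose integrand satisfies
\[
p_t^2/q_t-4\sqrt{p_tq_t}+3q_t=q_t(\sqrt{\rho_t}-1)^2(\rho_t+2\sqrt{\rho_t}+3)\ge 0,
\]
and uses the identity $2p_t^2/q_t+\sqrt{p_tq_t}=2(p_t^2/q_t-4\sqrt{p_tq_t}+3q_t)+9\sqrt{p_tq_t}-6q_t$. Dropping the $-6q_t$ term and applying $\|\bfDelta_t\|_2^2\le 4M^2$ only to the nonnegative piece gives $G'(t)\le 8M^2tG(t)+18D'(t)$. The integrating factor $e^{-4M^2t^2}$ together with $D'\ge 0$ then yields $e^{-4M^2}\{C(1)+2D(1)\}\le 18D(1)$.

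The principle common to both parts, which your proposal lacks, is this: apply the crude bound $\|\bfDelta_t\|_2\le 2M$ only to a nonnegative integrand whose integral is again controlled, by $D$ or by $G$ itself via Gronwall, so that the remainder is exactly a multiple of $D'$.
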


\noindent The key feature of Theorem \ref{thm:dimension_free_comparison} is that the comparison constants depend only on the radius $M$ of the mixing support, independently of the ambient
dimension $d$ or the number of mixture components. 

\begin{rmk} [Sharpness of the comparison constants]
\label{rmk:sharpness_comparison_constants} 
The constants $2 M^2 + 4$ and $18 \exp \left( 4 M^2 \right) - 2$ from the KL-$\sfH^2$ comparison inequality \eqref{eqn:thm:dimension_free_comparison_KL_H} and the $\chi^2$-$\sfH^2$ comparison inequality \eqref{eqn:thm:dimension_free_comparison_chi_H}, respectively, are \emph{sharp} in their dependence on the radius $M$ of the mixing support. Formally, we define the optimal comparison constants as
\begin{equation}
    \label{eqn:optimal_comparison_constants}
    \begin{split}
        C_{\textnormal{\textsf{KL}}} (M) := \ & \sup \left\{ \frac{\textnormal{\textsf{KL}} \left( \bbP_{\pi} \left\| \bbP_{\eta} \right. \right) + \textnormal{\textsf{KL}} \left( \left. \bbP_{\eta} \right\| \bbP_{\pi} \right)}{
        \sfH^2 \left( \bbP_{\pi}, \bbP_{\eta} \right)}: \pi \neq \eta \textnormal{ supported on } \bbB_{2}^{d} (M),\ d \in \bbN \right\}, \\
        C_{\chi^2} (M) := \ & \sup \left\{ \frac{\max \left\{ \chi^2 \left( \bbP_{\pi} \left\| \bbP_{\eta} \right. \right), \chi^2 \left( \left. \bbP_{\eta} \right\| \bbP_{\pi} \right) \right\}}{
        \sfH^2 \left( \bbP_{\pi}, \bbP_{\eta} \right)}: \pi \neq \eta \textnormal{ supported on } \bbB_{2}^{d} (M),\ d \in \bbN \right\}.
    \end{split}
\end{equation}
Then, Theorem \ref{thm:dimension_free_comparison} deduces $C_{\mathrm{KL}} (M) \leq 2 M^2 + 4$ and $C_{\chi^2} (M) \leq 18  \exp \left( 4 M^2 \right) - 2$. In order to establish lower bounds on the optimal comparison constants, take $d = 1$ and mixing distributions $\pi = \delta_{\left\{ M \right\}}$ and $\eta = \delta_{\left\{ - M \right\}}$ so that
\[
    \begin{split}
        \bbP_{\pi} = \calN \left( M, 1 \right), \qquad \bbP_{\eta} = \calN \left( -M, 1 \right).
    \end{split}
\]
Direct calculations give
\[
    \begin{split}
        \textnormal{\textsf{KL}} \left( \bbP_{\pi} \left\| \bbP_{\eta} \right. \right) + \textnormal{\textsf{KL}} \left( \left. \bbP_{\eta} \right\| \bbP_{\pi} \right) = \ & 4 M^2, \\
        \max \left\{ \chi^2 \left( \bbP_{\pi} \left\| \bbP_{\eta} \right. \right), \chi^2 \left( \left. \bbP_{\eta} \right\| \bbP_{\pi} \right) \right\} = \ & \exp \left( 4 M^2 \right) - 1, \\
        \sfH^2 \left( \bbP_{\pi}, \bbP_{\eta} \right) = \ & 2 \left\{ 1 - \exp \left( - \frac{M^2}{2} \right) \right\},
    \end{split}
\]
thereby we obtain
\begin{equation}
    \label{eqn:optimal_comparison_constants_bounds_KL}
    \begin{split}
        \frac{2 M^2}{1 - \exp \left( - \frac{M^2}{2} \right)} \leq C_{\textnormal{\textsf{KL}}} (M) \leq 2 M^2 + 4
    \end{split}
\end{equation}
and
\begin{equation}
    \label{eqn:optimal_comparison_constants_bounds_chi}
    \begin{split}
        \frac{\exp \left( 4 M^2 \right) - 1}{2 \left\{ 1 - \exp \left( - \frac{M^2}{2} \right) \right\}} \leq C_{\chi^2} (M) \leq 18 \exp \left( 4 M^2 \right) - 2.
    \end{split}
\end{equation}
With the bounds \eqref{eqn:optimal_comparison_constants_bounds_KL} and \eqref{eqn:optimal_comparison_constants_bounds_chi} on the optimal comparison constants, one can make the following observations:
\begin{itemize}
    \item The KL-$\sfH^2$ comparison inequality \eqref{eqn:thm:dimension_free_comparison_KL_H} is asymptotically sharp with the leading constant $2$: because $C_{\textnormal{\textsf{KL}}} (M) = 2 M^2 + \calO (1)$ as $M \to +\infty$, the upper bound of \eqref{eqn:optimal_comparison_constants_bounds_KL} has the optimal leading coefficient $2$. In particular, the quadratic dependence on $M$ cannot be improved.
    \item The $\chi^2$-$\sfH^2$ comparison constant from \eqref{eqn:thm:dimension_free_comparison_chi_H} is optimal up to a constant factor: since the lower bound of \eqref{eqn:optimal_comparison_constants_bounds_chi} is asymptotic to $\frac{1}{2} \exp \left( 4 M^2 \right)$ as $M \to +\infty$, it follows that $\log C_{\chi^2} (M) = 4 M^2 + \calO (1)$ as $M \to +\infty$, implying that the coefficient $4$ in the exponent is exact. Furthermore, the upper and lower bounds from \eqref{eqn:optimal_comparison_constants_bounds_chi} differ by a constant factor $36$, so the result is sharp up to a constant factor.
\end{itemize}
\end{rmk}

\indent Our results resolve an open question raised by \cite{jia2023entropic}, who established the equivalence of the KL-divergence, squared Hellinger distance, and $\chi^2$-divergence for compactly supported Gaussian location mixtures, but with the  comparison constants depending on both $M$ and $d$. They further obtained a dimension-free KL-Hellinger comparison inequality at the expense of an additional
logarithmic factor:
\[
    \textsf{KL} \left( \bbP_{\pi} \left\| \bbP_{\eta} \right. \right) \lesssim_M
    \sfH^2 \left( \bbP_{\pi}, \bbP_{\eta} \right)
    \log \frac{1}{\sfH^2 \left( \bbP_{\pi}, \bbP_{\eta} \right)}.
\]
For mixing distributions supported on at most $k$ points, \cite{doss2023optimal} established a dimension-free KL--Hellinger comparison without the logarithmic factor, but with a constant growing exponentially in $k^2$. Theorem \ref{thm:dimension_free_comparison} removes
the logarithmic factor and the dependence on the number of mixture components, establishing dimension-free comparisons for both the KL and $\chi^2$ divergences.

\indent These comparison inequalities are closely related to the literature of density
estimation, where the KL-divergence, squared Hellinger distance, and $\chi^2$-divergence are widely used as performance metrics of estimators \cite{zhang2009generalized, jiang2009general, kim2014minimax, saha2020nonparametric, polyanskiy2021sharp, kim2022minimax}. In particular, the squared Hellinger distance plays an important role in Gaussian mixture density estimation, with applications to regret bounds for empirical Bayes estimation under the $g$-modeling approach \cite{jiang2009general, chen2026sharp,efron2014two, efron2024empirical}.

\section{Proof of Theorem \ref{thm:dimension_free_comparison}}
\label{sec:proofs_dimension_free_comparison_inequalities}

Our strategy for the proof of Theorem \ref{thm:dimension_free_comparison} heavily relies on an interpolation from the standard Gaussian distribution to each Gaussian location mixture. The core observation is that the derivatives of the interpolation curves induced by the KL-divergence, squared Hellinger distance, and $\chi^2$-divergence are all weighted integrals of the same squared Euclidean norm of the difference between posterior means. Comparison of these derivatives yields both comparison inequalities.

\subsection{Interpolation between the standard Gaussian and Gaussian location mixtures}
\label{subsec:interpolation_standard_gaussian_gaussian_location_mixtures}

\indent Let $\phi_{d} (\bsx) := \left( 2 \pi \right)^{- \frac{d}{2}} \exp \left( - \frac{\left\| \bsx \right\|_{2}^{2}}{2} \right)$ denote the density function of the $d$-dimensional standard Gaussian distribution. We interpolate between $\phi_d$ and the density functions of the two Gaussian location mixtures via scaling the mixing vectors: for $t \in \bbR$, we define
\begin{equation}
    \label{eqn:interpolation_standard_gaussian_gaussian_location_mixtures_v1}
    \begin{split}
        p_{t} (\bsx) := \ & \int_{\bbB_{2}^d (M)} \phi_{d} \left( \bsx - t \bftheta \right) \pi \left( \mathrm{d} \bftheta \right), \\
        q_{t} (\bsx) := \ & \int_{\bbB_{2}^d (M)} \phi_{d} \left( \bsx - t \bftheta \right) \eta \left( \mathrm{d} \bftheta \right).
    \end{split}
\end{equation}
Then, we obtain $p_0 = q_0 = \phi_d$, while $p_1$ and $q_1$ are the density functions of $\bbP_{\pi}$ and $\bbP_{\eta}$, respectively. Equivalently, $p_t$ and $q_t$ are the density functions of $\bsX (t) = t \bfTheta + \bsZ$, when $\bfTheta \sim \pi$ and $\bfTheta \sim \eta$, respectively, along with $\bsZ \sim \calN_{d} \left( \bfzero_d, \bsI_d \right)$ independent of $\bfTheta$. Let $\bbE_{\pi}$ and $\bbE_{\eta}$ denote the expectations under these two mixing distributions.

\indent Hereafter, let us adopt the same divergence notations for density functions and their corresponding probability measures. Along the interpolation, we define for $t \in \bbR$:
\begin{equation}
    \label{eqn:interpolation_standard_gaussian_gaussian_location_mixtures_v2}
    \begin{split}
        J (t) := \ & \textsf{KL} \left( p_t \left\| q_t \right. \right) + \textsf{KL} \left( q_t \left\| p_t \right. \right), \\
        C(t):= \ & \chi^2 \left( p_t \left\| q_t \right. \right), \\
        D (t) := \ & \sfH^2 \left( p_t, q_t \right).
    \end{split}
\end{equation}
Then, our goal is to upper bound $J (1)$ and $C (1)$ in terms of $D (1)$. Since $p_0 = q_0$, we have $J (0) = D (0) = C (0) = 0$, so one can obtain the desired bounds by comparing the derivatives along the interpolation and integrating from $0$ to $1$. The derivative formulas involve the posterior means of the original mixing vector, which we define by
\begin{equation}
    \label{eqn:interpolation_standard_gaussian_gaussian_location_mixtures_v3}
    \begin{split}
        \bsu_t (\bsx) := \bbE_{\pi} \left[ \bfTheta \mid \bsX (t) = \bsx \right], \qquad \bsv_t (\bsx) := \bbE_{\eta} \left[ \bfTheta \mid \bsX (t) = \bsx \right].
    \end{split}
\end{equation}
Since both the posterior distributions are supported on $\bbB_{2}^d (M)$, their means should satisfy
\begin{equation}
    \label{eqn:interpolation_standard_gaussian_gaussian_location_mixtures_v4}
    \left\| \bsu_t (\bsx) \right\|_{2} \leq  M, \qquad \left\| \bsv_t (\bsx) \right\|_{2} \leq  M.
\end{equation}
For each $t \in \bbR$, set $\bfDelta_t := \bsu_t - \bsv_t : \bbR^d \to \bbR^d$. In particular, $\left\| \bfDelta_t (\bsx) \right\|_{2} \leq 2M$ for every $\left( t, \bsx \right) \in \bbR \times \bbR^d$.

\paragraph{Divergence derivatives.}
The following lemma provides expressions of all three derivatives as weighted integrals of $\left\| \bfDelta_t (\bsx) \right\|_{2}^2$, whose detailed proof is deferred to Appendix \ref{sec:proof_lemma:proof_thm:dimension_free_comparison_divergence_derivatives}.

\begin{lemma} [Divergence derivatives]
\label{lemma:proof_thm:dimension_free_comparison_divergence_derivatives}
The functions $J$, $C$, and $D$ defined in \eqref{eqn:interpolation_standard_gaussian_gaussian_location_mixtures_v2} are all finite and belong to $\calC^1 (\bbR)$. Their derivatives are
\begin{equation}
    \label{eqn:lemma:proof_thm:dimension_free_comparison_divergence_derivatives_v1}
    \begin{split}
        J' (t) = \ & t \int_{\bbR^d} \left\{ p_t (\bsx) + q_t (\bsx) \right\} \left\| \bfDelta_t (\bsx) \right\|_{2}^2 \mathrm{d} \bsx, \\
        C' (t) = \ & 2t \int_{\bbR^d} \frac{p_{t}^2 (\bsx)}{q_{t} (\bsx)} \left\| \bfDelta_t (\bsx) \right\|_{2}^2 \mathrm{d} \bsx, \\
        D' (t) = \ & \frac{t}{2} \int_{\bbR^d} \sqrt{p_t (\bsx) q_t (\bsx)} \left\| \bfDelta_t (\bsx) \right\|_{2}^2 \mathrm{d} \bsx.
    \end{split}
\end{equation}
\end{lemma}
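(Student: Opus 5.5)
Fix $t$ and write $\partial_t$ for differentiation in $t$. The plan is to compute $\partial_t p_t$ and $\partial_t q_t$ in terms of posterior means, integrate by parts in $\bsx$ so that everything becomes a single quadratic form in $\bfDelta_t$, and only afterwards justify the analytic steps. Differentiating under the integral in \eqref{eqn:interpolation_standard_gaussian_gaussian_location_mixtures_v1} and using $\partial_t \phi_d(\bsx - t\bftheta) = -\bftheta \cdot \nabla_{\bsx}\phi_d(\bsx - t\bftheta)$ gives
\[
\partial_t p_t(\bsx) = -\nabla \cdot \bigl( p_t(\bsx)\, \bsu_t(\bsx) \bigr), \qquad \partial_t q_t(\bsx) = -\nabla \cdot \bigl( q_t(\bsx)\, \bsv_t(\bsx) \bigr),
\]
since $\int \bftheta\, \phi_d(\bsx - t\bftheta)\,\pi(\mathrm{d}\bftheta) = p_t(\bsx)\,\bsu_t(\bsx)$. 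This is a continuity equation with velocity fields $\bsu_t$ and $\bsv_t$. The second ingredient is Tweedie's formula: $\nabla \log p_t(\bsx) = -\bsx + t\,\bsu_t(\bsx)$ and $\nabla \log q_t(\bsx) = -\bsx + t\,\bsv_t(\bsx)$. Subtracting, $\nabla \log (p_t/q_t) = t\,\bfDelta_t$.

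Each divergence is $\int q_t\, f(p_t/q_t)$ for some $f$, with $f(r) = (r-1)\log r$ for $J$, $f(r) = (r-1)^2$ for $C$, and $f(r) = (\sqrt r - 1)^2$ for $D$. Let $r_t = p_t/q_t$. Differentiating and using the continuity equation, then integrating by parts, gives
\[
\frac{d}{dt}\int q_t f(r_t) = \int p_t\,\bsu_t \cdot \nabla\bigl(f'(r_t)\bigr) + \int q_t\,\bsv_t \cdot \nabla\bigl(f(r_t) - r_t f'(r_t)\bigr).
\]
Now $\nabla (f'(r)) = f''(r)\nabla r$ and $\nabla(f - r f') = -r f''(r)\nabla r$. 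Together with $q_t \nabla r_t = p_t\, t\,\bfDelta_t$, the derivative becomes
\[
t\int p_t\, r_t f''(r_t)\,\bfDelta_t\cdot(\bsu_t - \bsv_t) = t\int p_t\, r_t f''(r_t)\,\|\bfDelta_t\|_2^2 .
\]
Substituting the three choices of $f$:
\begin{itemize}
\item For $J$, $r f''(r) = 1/r + 1/r^2\cdot r = 1 + 1/r$, which yields the weight $p_t + q_t$.
\item For $C$, $r f''(r) = 2r$, which yields $2p_t^2/q_t$.
\item For $D$, $r f''(r) = \tfrac12 r^{-1/2}$, which yields $\tfrac12\sqrt{p_t q_t}$.
\end{itemize}
These are exactly \eqref{eqn:lemma:proof_thm:dimension_free_comparison_divergence_derivatives_v1}.

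The main obstacle is the analytic justification: finiteness of the divergences, differentiation under the integral sign, vanishing boundary terms in the integrations by parts, and continuity of the derivatives. The key tool is the bound $\phi_d(\bsx)e^{-c|t|M\|\bsx\| - t^2M^2} \lesssim p_t, q_t \lesssim \phi_d(\bsx)e^{|t|M\|\bsx\|}$, which follows from compact support. It implies that $r_t$ and $1/r_t$ grow at most like $e^{2|t|M\|\bsx\|}$, that $|\log r_t| \le 2|t|M\|\bsx\| + \text{const}$, and that $p_t^2/q_t \lesssim \phi_d e^{3|t|M\|\bsx\|}$ is integrable. With these bounds, together with $\|\bsu_t\|_2, \|\bsv_t\|_2 \le M$ and derivatives of $p_t$ in $t$ and $\bsx$ bounded by polynomial-times-exponential multiples of $\phi_d$, all integrands are dominated, locally uniformly in $t$, by integrable Gaussian-type envelopes. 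Dominated convergence then licenses differentiation under the integral, kills the boundary terms (integrate over balls and let the radius go to infinity), and gives continuity of the resulting expressions in $t$, hence $J, C, D \in \calC^1(\bbR)$.

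For $D$, the function $f$ is not smooth at $0$, but $r_t > 0$ everywhere, so no issue arises.
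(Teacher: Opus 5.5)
Your proposal is correct and is essentially the paper's argument. Testing your continuity equation $\partial_t p_t = -\nabla\cdot(p_t\bsu_t)$ against $g$ is exactly the Gaussian integration-by-parts identity of Lemma~\ref{lemma:gaussian_integration_by_parts}, combined as in the paper with Tweedie's formula $\nabla_{\bsx}\log r_t = t\bfDelta_t$ and the same Gaussian envelope bounds for domination and boundary terms. The only difference is bookkeeping: you derive the single identity $\frac{\mathrm{d}}{\mathrm{d}t}\int q_t f(r_t) = t\int p_t\, r_t f''(r_t)\|\bfDelta_t\|_2^2$ (the de Bruijn form of Section~\ref{sec:connection_de_bruijn_identity}) and specialize, whereas the paper treats $J$, $C$, $D$ separately. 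Your test functions $f'(r_t)$ and $f(r_t)-r_tf'(r_t)$ reduce to exactly the functions $\log r_t$ and $r_t^{\alpha}$, $\alpha\in\{\pm\tfrac12,\pm1,2\}$, whose growth the paper checks. One small slip: in the $J$ case your intermediate expression for $rf''(r)$ is garbled, though the result $1+1/r$ is right.
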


\noindent The derivative formula for $D$ from \eqref{eqn:lemma:proof_thm:dimension_free_comparison_divergence_derivatives_v1} implies $D' (t) \geq 0$ for $t \geq 0$; in particular, the function $D$ is non-decreasing on $[0, +\infty)$ and $D (t) \leq D (1)$ for $t \in [0, 1]$. 

\indent At a formal level, the derivative identities follow from a short
integration-by-parts calculation using the posterior mean representations of the score functions. We present this calculation in Appendix \ref{subsec:proof_divergence_derivative_formulas}, and provide the analytic justifications, including differentiation under the integral sign and control of boundary terms, in Appendix \ref{subsec:analytic_justifications}.

\begin{rmk}
\label{rmk:lemma:proof_thm:dimension_free_comparison_divergence_derivatives}
The derivative identities from Lemma \ref{lemma:proof_thm:dimension_free_comparison_divergence_derivatives} are particular cases of the generalized de Bruijn's identity for $f$-divergences; see \emph{Proposition 6} of \cite{valero2017generalization}. Applying this result requires identifying the PDE exhibited by the interpolation densities and verifying some local integrability and boundary-decay conditions. We discuss this connection in Section~\ref{sec:connection_de_bruijn_identity}.
\end{rmk}

\subsection{Proof of the KL-divergence comparison inequality}
\label{subsec:proof_eqn:thm:dimension_free_comparison_KL_H}

\indent To establish the dimension-free $\textsf{KL}$-$\sfH^2$ comparison inequality \eqref{eqn:thm:dimension_free_comparison_KL_H}, it is sufficient to prove $J (1) \leq \left( 2 M^2 + 4 \right) D (1)$. Towards this end, we compare $J' (t)$ with $D' (t)$ by leveraging the posterior mean bound \eqref{eqn:interpolation_standard_gaussian_gaussian_location_mixtures_v4} from Section \ref{subsec:interpolation_standard_gaussian_gaussian_location_mixtures}, and then use the monotonicity of $D$. The pointwise identity
\[
    \begin{split}
        p_t (\bsx) + q_t (\bsx) = \left\{ \sqrt{p_t (\bsx)} - \sqrt{q_t (\bsx)} \right\}^2 + 2 \sqrt{p_t (\bsx) q_t (\bsx)}, \qquad \left( t, \bsx \right) \in \bbR \times \bbR^d,
    \end{split}
\]
and the derivative identities from \eqref{eqn:lemma:proof_thm:dimension_free_comparison_divergence_derivatives_v1} give
\begin{equation}
    \label{eqn:proof_eqn:thm:dimension_free_comparison_KL_H_v1}
    \begin{split}
        J' (t) = \ & t \int_{\bbR^d} \left\{ \sqrt{p_t (\bsx)} - \sqrt{q_t (\bsx)} \right\}^2 \underbrace{\left\| \bfDelta_t (\bsx) \right\|_{2}^2}_{\leq \ 4 M^2} \mathrm{d} \bsx + 2t \int_{\bbR^d} \sqrt{p_t (\bsx) q_t (\bsx)} \left\| \bfDelta_t (\bsx) \right\|_{2}^2 \mathrm{d} \bsx \\
        \stackrel{\textnormal{(a)}}{\leq} \ & 4 M^2 t D (t) + 4 D' (t)
    \end{split}
\end{equation}
for every $t \geq 0$, where the step (a) invokes the fact $\left\| \bfDelta_t (\bsx) \right\|_{2} \leq 2M$. Define
\[
    \begin{split}
        F_{\textsf{KL}} (t) := J (t) - 4 D (t) - 2 M^2 t^2 D (1), \qquad t \in \bbR.
    \end{split}
\]
Then, it holds for every $t \in [0, 1]$ that
\begin{equation}
    \label{eqn:proof_eqn:thm:dimension_free_comparison_KL_H_v2}
    \begin{split}
        F_{\textsf{KL}}' (t) = J' (t) - 4 D' (t) - 4 M^2 t D (1) \stackrel{\textnormal{(b)}}{\leq} 4 M^2 t \left\{ D (t) - D (1) \right\} \stackrel{\textnormal{(c)}}{\leq} 0,
    \end{split}
\end{equation}
where the step (b) holds due to the inequality \eqref{eqn:proof_eqn:thm:dimension_free_comparison_KL_H_v1}, and the step (c) follows since $D (t) \leq D (1)$ for $t \in [0, 1]$, which comes from the monotonicity of $D$ on $[0, +\infty)$. Therefore, $F_{\textsf{KL}}$ is non-increasing on $[0, 1]$ and
\begin{equation}
    \label{eqn:proof_eqn:thm:dimension_free_comparison_KL_H_v3}
    \begin{split}
        F_{\textsf{KL}} (1) = J (1) - \left( 2 M^2 + 4 \right) D (1) \leq F_{\textsf{KL}} (0) = 0,
    \end{split}
\end{equation}
concluding the proof of the $\textsf{KL}$-$\sfH^2$ comparison inequality \eqref{eqn:thm:dimension_free_comparison_KL_H}.

\subsection{Proof of the chi-squared divergence comparison inequality}
\label{subsec:proof_eqn:thm:dimension_free_comparison_chi_H}

\indent In order to upper bound $C (1) = \chi^2 \left( \bbP_{\pi} \left\| \bbP_{\eta} \right. \right)$ in terms of $D (1) = \sfH^2 \left( \bbP_{\pi}, \bbP_{\eta} \right)$, we mainly work with the interpolation curve $G := C + 2D$ in lieu of directly comparing $C'$ with $D'$. This combination allows us to control $G' (t)$ in terms of $G (t)$ and $D' (t)$. We claim that
\begin{equation}
\label{eqn:proof_proof_eqn:thm:dimension_free_comparison_chi_H_v1}
    \begin{split}
        G' (t) \leq 8 M^2 t G (t) + 18 D' (t), \qquad t \in [0, +\infty)
    \end{split}
\end{equation}
We first show how this differential inequality completes the proof of the $\chi^2$-$\sfH^2$ comparison \eqref{eqn:thm:dimension_free_comparison_chi_H}, and then verify it. As in the proof of the \textsf{KL}-$\sfH^2$ comparison inequality \eqref{eqn:thm:dimension_free_comparison_KL_H}, we construct a non-increasing function. The multiplicative factor $\exp \left( - 4 M^2 t^2 \right)$ cancels the term $8 M^2 t G(t)$, so we define
\[
    \begin{split}
        F_{\chi^2} (t) := \exp \left( - 4 M^2 t^2 \right) G (t) - 18 D (t), \qquad t \in \bbR.
    \end{split}
\]
Then, it follows by the differential inequality \eqref{eqn:proof_proof_eqn:thm:dimension_free_comparison_chi_H_v1} alongside with $D' (t) \geq 0$ for every $t \geq 0$ that
\[
    \begin{split}
        F_{\chi^2}' (t) = \exp \left( - 4 M^2 t^2 \right) \left\{ G' (t) - 8 M^2 t G (t) \right\} - D' (t) \leq - 18 \left\{ 1 - \exp \left( - 4 M^2 t^2 \right) \right\} D' (t) \leq 0
    \end{split}
\]
for every $t \geq 0$. Therefore, the function $F_{\chi^2}$ is non-increasing on $[0, +\infty)$. Since $G (1) = C (1) + 2 D (1)$ and $G (0) = D (0) = 0$, we obtain
\begin{equation}
    \label{eqn:proof_proof_eqn:thm:dimension_free_comparison_chi_H_v2}
    \begin{split}
        F_{\chi^2} (1) = \exp \left( - 4 M^2 \right) \left\{ C (1) + 2 D (1) \right\} - 18 D (1) \leq F_{\chi^2} (0) = 0.
    \end{split}
\end{equation}
Rearranging the bound \eqref{eqn:proof_proof_eqn:thm:dimension_free_comparison_chi_H_v2} yields the desired comparison inequality:
\begin{equation}
    \label{eqn:proof_proof_eqn:thm:dimension_free_comparison_chi_H_v3}
    \begin{split}
        \chi^2 \left( \bbP_{\pi} \left\| \bbP_{\eta} \right. \right) = C (1) \leq \left\{ 18 \exp \left( 4 M^2 \right) - 2 \right\} D (1) = \left\{ 18 \exp \left( 4 M^2 \right) - 2 \right\} \sfH^2 \left( \bbP_{\pi}, \bbP_{\eta} \right).
    \end{split}
\end{equation}
Interchanging $\pi$ and $\eta$ produces the reverse $\chi^2$-$\sfH^2$ comparison bound $\chi^2 \left( \bbP_{\eta} \left\| \bbP_{\pi} \right. \right) \leq  \left\{ 18 \exp \left( 4 M^2 \right) - 2 \right\} \sfH^2 \left( \bbP_{\pi}, \bbP_{\eta} \right)$
due to the symmetry of the squared Hellinger distance. Hence, the differential inequality \eqref{eqn:proof_proof_eqn:thm:dimension_free_comparison_chi_H_v1} implies the desired $\chi^2$-$\sfH^2$ comparison bound \eqref{eqn:thm:dimension_free_comparison_chi_H}.

\indent Thus, it remains to prove the differential inequality \eqref{eqn:proof_proof_eqn:thm:dimension_free_comparison_chi_H_v1}. Write $\rho_{t} := \frac{p_{t}}{q_{t}}$ for $t \in \bbR$ and consider the non-negative integrand $\calG : \bbR \times \bbR^d \to [0, +\infty)$, where
\[
    \begin{split}
        \calG \left( t, \bsx \right) := \ & \frac{p_{t}^2 (\bsx)}{q_{t} (\bsx)} - 4 \sqrt{p_{t} (\bsx) q_{t} (\bsx)} + 3 \\
        = \ & q_{t} (\bsx) \left\{ \sqrt{\rho_{t} (\bsx)} - 1 \right\}^2 \left\{ \rho_{t} (\bsx) + 2 \sqrt{\rho_{t} (\bsx)} + 3 \right\} \geq 0
    \end{split}
\]
Then, the definitions of $C$ and $D$ together with the fact $\int_{\bbR^d} q_{t} (\bsx) \mathrm{d} \bsx = 1$ gives
\begin{equation}
    \label{eqn:proof_proof_eqn:thm:dimension_free_comparison_chi_H_v4}
    \begin{split}
        \int_{\bbR^d} \calG \left( t, \bsx \right) \mathrm{d} \bsx = C (t) + 2 D (t) = G (t), \qquad t \in \bbR.
    \end{split}
\end{equation}
By virtue of the derivative formulas for $C$ and $D$ from \eqref{eqn:lemma:proof_thm:dimension_free_comparison_divergence_derivatives_v1}, one can reveal for every $t \geq 0$ that
\begin{equation}
    \label{eqn:proof_proof_eqn:thm:dimension_free_comparison_chi_H_v5}
    \begin{split}
        G' (t) = \ & t \int_{\bbR^d} \left\{ 2 \calG \left( t, \bsx \right) + 9 \sqrt{p_{t} (\bsx) q_{t} (\bsx)} - 6 q_{t} (\bsx) \right\} \left\| \bfDelta_{t} (\bsx) \right\|_{2}^2 \mathrm{d} \bsx \\
        \stackrel{\textnormal{(a)}}{\leq} \ & 2t \int_{\bbR^d} \calG \left( t, \bsx \right) \underbrace{\left\| \bfDelta_{t} (\bsx) \right\|_{2}^2}_{\leq \ 4 M^2} \mathrm{d} \bsx + 18 D' (t) \\
        \stackrel{\textnormal{(b)}}{\leq} \ & 8 M^2 t G (t) + 18 D' (t),
    \end{split}
\end{equation}
where the step (a) holds by dropping the non-positive integral term involving $- 6 q_{t} (\bsx)$ and leveraging the derivative formula for $D$ from \eqref{eqn:lemma:proof_thm:dimension_free_comparison_divergence_derivatives_v1}, and the step (b) is a consequence of the facts $\calG \geq 0$ together with $\left\| \bfDelta_{t} (\bsx) \right\|_{2} \leq 2M$. This establishes the desired differential inequality \eqref{eqn:proof_proof_eqn:thm:dimension_free_comparison_chi_H_v1}, and therefore completes the proof of Theorem \ref{thm:dimension_free_comparison}.

\section{Connections to the generalized de Bruijn's identity}
\label{sec:connection_de_bruijn_identity}

Let $p : \bbR^d \to [0, +\infty)$ and $q : \bbR^d \to [0, +\infty)$ be two density functions. For any given twice continuously differentiable convex function $f : [0, +\infty) \to \bbR$, the \emph{$f$-divergence} between $p$ and $q$ is defined as
\[
    \begin{split}
        \calD_{f} \left( p \| q \right) := \int_{\bbR^d} q (\bsx) f \left\{ \frac{p (\bsx)}{q (\bsx)} \right\} \mathrm{d} \bsx,
    \end{split}
\]
and the associated spatial Fisher divergence matrix is given by
\begin{equation}
    \label{eqn:spatial_fisher_divergence_matrix}
    \begin{split}
        \calI_{f} \left( p \| q \right) := \int_{\bbR^d} q (\bsx) \rho^2 (\bsx) f'' \left\{ \rho (\bsx) \right\} \cdot \nabla_{\bsx} \log \rho (\bsx) \nabla_{\bsx}^{\top} \log \rho (\bsx) \mathrm{d} \bsx, \qquad \rho (\bsx) := \frac{p (\bsx)}{q (\bsx)}.
    \end{split}
\end{equation}
Taking the trace to the equation \eqref{eqn:spatial_fisher_divergence_matrix} gives
\[
    \begin{split}
        \mathrm{Trace} \left\{ \calI_{f} \left( p \| q \right) \right\} = \int_{\bbR^d} q (\bsx) \rho^2 (\bsx) f'' \left\{ \rho (\bsx) \right\} \left\| \nabla_{\bsx} \log \rho (\bsx) \right\|_{2}^2 \mathrm{d} \bsx.
    \end{split}
\]
A straightforward calculation reveals that the interpolation families $\left\{ p_{t} : t \in \bbR \right\}$ and $\left\{ q_{t} : t \in \bbR \right\}$ from \eqref{eqn:interpolation_standard_gaussian_gaussian_location_mixtures_v1} satisfy the following PDEs:
\begin{equation}
    \label{eqn:de_bruijn_pde}
    \begin{split}
        t \cdot \partial_{t} p_{t} = - \mathrm{div}_{\bsx} \left( \bsx p_{t} \right) - \Delta_{\bsx} p_{t}, \qquad t \cdot \partial_{t} q_{t} = - \mathrm{div}_{\bsx} \left( \bsx q_{t} \right) - \Delta_{\bsx} q_{t},
    \end{split}
\end{equation}
where the symbols here mean
\[
    \begin{split}
        \mathrm{div}_{\bsx} \bfomega := \sum_{i=1}^{d} \partial_{x_i} \omega_{i}, \qquad \Delta_{\bsx} g := \mathrm{div}_{\bsx} \left( \nabla_{\bsx} g \right) = \sum_{i=1}^{d} \partial_{x_{i}}^2 g.
    \end{split}
\]
Following the notation of the equation (23) in \cite{valero2017generalization}, the corresponding linear operators are given by
\[
    \begin{split}
        \calL_{1}: \bbR \to \bbR,& \qquad \calL _{1} (a) = ta, \\
        \calL_{2} : \bbR \to \bbR,& \qquad \calL_{2} (a) = 0, \\
        \beta_{1}: \bbR^d \times \bbR \to \bbR^d,& \qquad \beta_{1} \left( \bsx, t \right) = \bsx, \\ 
        \calK_{1}: \bbR^{d \times d} \to \bbR,& \qquad \calK_{1} (\bsM) = - \mathrm{Trace} (\bsM), \\
        \calK_{2}: \bbR^{d \times d} \to \bbR,& \qquad \calK_{2} (\bsM) = - \mathrm{Trace} (\bsM).
    \end{split}
\]
Applying the generalized multivariate de Bruijn's identity for $f$-divergences (\emph{Proposition 6} from \cite{valero2017generalization}) to the PDEs \eqref{eqn:de_bruijn_pde}, it follows that
\begin{equation}
    \label{eqn:de_bruijn_identity}
    \begin{split}
        t \cdot \partial_{t} \calD_{f} \left( p_{t} \left\| q_{t} \right. \right) = \mathrm{Trace} \left\{ \calI_{f} \left( p_{t} \left\| q_{t} \right. \right) \right\}.
    \end{split}
\end{equation}
Hence, one can obtain the divergence derivative formulae \eqref{eqn:lemma:proof_thm:dimension_free_comparison_divergence_derivatives_v1} by applying the resulting de Bruijn's identity \eqref{eqn:de_bruijn_identity} with the following three generators:
\[
    \begin{split}
        f_{\mathrm{K}}: [0, +\infty) \to \bbR, \qquad f_{\mathrm{K}} (r) := \ &
        \begin{cases}
            r \log r - r + 1 & \textnormal{if } r > 0; \\
            1 & \textnormal{if } r = 0,
        \end{cases} \\
        f_{\mathrm{H}}: [0, +\infty) \to \bbR, \qquad f_{\mathrm{H}} (r) := \ & \left( \sqrt{r} - 1 \right)^2, \\
        f_{\mathrm{C}}: [0, +\infty) \to \bbR, \qquad f_{\mathrm{C}} (r) := \ & \left( r - 1 \right)^2.
    \end{split}
\]
In order to leverage \emph{Proposition 6} from \cite{valero2017generalization} to the PDE \eqref{eqn:de_bruijn_pde}, we need to provide the following analytic justifications for each choice of $f \in \left\{ f_{\mathrm{K}}, f_{\mathrm{H}}, f_{\mathrm{C}} \right\}$, which turns out to be rather involved:
\begin{enumerate} [label = (\roman*)]
    \item The local integrability of the functions $\partial_{t} \left\{ q_{t} f \left( \frac{p_{t}}{q_{t}} \right) \right\}$ and $\partial_{t}^2 \left\{ q_{t} f \left( \frac{p_{t}}{q_{t}} \right) \right\}$.
    \item The vanishing boundary conditions of the functions $\bsx q_{t} f \left( \frac{p_{t}}{q_{t}} \right)$ and $\nabla_{\bsx} \left\{ q_{t} f \left( \frac{p_{t}}{q_{t}} \right) \right\}$. 
\end{enumerate}

\newpage

\bibliographystyle{plain}
\bibliography{main.bib}

\newpage

\appendix

\section{Proof of Lemma \ref{lemma:proof_thm:dimension_free_comparison_divergence_derivatives}}
\label{sec:proof_lemma:proof_thm:dimension_free_comparison_divergence_derivatives}

\subsection{Two basic identities}
\label{subsec:basic_identities}

We first provide the two identities used to derive the divergence derivative formulas. The score identity represents spatial derivatives through posterior means, while integration by parts converts derivatives of the mixture densities into spatial derivatives of the test functions. Their proofs, together with the growth and integrability checks needed below, are deferred to Appendix \ref{subsec:analytic_justifications}.

\indent The first identity is the score identity for the posterior means of the original mixing vector \eqref{eqn:interpolation_standard_gaussian_gaussian_location_mixtures_v3}. Because $p_t$ and $q_t$ are the density functions of $\bsX (t) = t \bfTheta + \bsZ$, when $\bfTheta \sim \pi$ and $\bfTheta \sim \eta$, respectively, alongside with $\bsZ \sim \calN_{d} \left( \bfzero_d, \bsI_d \right)$ independent of $\bfTheta$, the following equations can be obtained by applying Tweedie's formula \cite{robbins1992empirical, efron2011tweedie}:
\begin{equation}
    \label{eqn:lemma:posterior_means_score_identities}
    \begin{split}
        \nabla_{\bsx} \log p_t (\bsx) = - \bsx + t \bsu_t (\bsx), \qquad \nabla_{\bsx} \log q_t (\bsx) = - \bsx + t \bsv_t (\bsx), \qquad \nabla_{\bsx} \log \frac{p_t (\bsx)}{q_t (\bsx)} =  t \bfDelta_t (\bsx).
    \end{split}
\end{equation}

\indent The second identity is the Gaussian integration by parts identity, established through the following lemma:

\begin{lemma} [Gaussian integration by parts]
\label{lemma:gaussian_integration_by_parts}
Let $g \in \calC^{1} \left( \bbR^d \right)$ and suppose that there exist some constants $L \in [0, +\infty)$ and $c \in [0, +\infty)$ such that
\begin{equation}
    \label{eqn:lemma:gaussian_integration_by_parts_v1}
    \begin{split}
        \left| g (\bsx) \right| + \left\| \nabla_{\bsx} g (\bsx) \right\|_{2} \leq L \exp \left( c \left\| \bsx \right\|_{2} \right), \qquad \bsx \in \bbR^d.
    \end{split}
\end{equation}
Then for every $t \in \bbR$, all the integrals below converge absolutely and it holds that
\begin{equation}
    \label{eqn:lemma:gaussian_integration_by_parts_v2}
    \begin{split}
        \int_{\bbR^d}  \partial_{t} p_t (\bsx) g (\bsx) \mathrm{d} \bsx = \ & \int_{\bbR^d} p_t (\bsx) \cdot \nabla_{\bsx}^{\top} g (\bsx) \bsu_{t} (\bsx) \mathrm{d} \bsx, \\
        \int_{\bbR^d} \partial_{t} q_t (\bsx) g (\bsx) \mathrm{d} \bsx = \ & \int_{\bbR^d} q_t (\bsx) \cdot \nabla_{\bsx}^{\top} g (\bsx) \bsv_{t} (\bsx) \mathrm{d} \bsx.
    \end{split}
\end{equation}
\end{lemma}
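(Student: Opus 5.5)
We have to prove Lemma \ref{lemma:gaussian_integration_by_parts}. The plan is to compute $\partial_t p_t$ explicitly, rewrite it as a divergence in $\bsx$, and then integrate by parts against $g$; the statement for $q_t$ follows verbatim with $\eta$ and $\bsv_t$ in place of $\pi$ and $\bsu_t$.

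\textbf{Step 1: differentiate under the integral.} Since $\pi$ is supported on the compact ball $\bbB_2^d(M)$ and $\phi_d$ is smooth, we may differentiate under the integral sign. Using $\nabla_{\bsx}\phi_d(\bsy) = -\bsy\,\phi_d(\bsy)$,
\[
\partial_t p_t(\bsx) = \int \bftheta^\top (\bsx - t\bftheta)\,\phi_d(\bsx - t\bftheta)\,\pi(\mathrm{d}\bftheta) = -\,\mathrm{div}_{\bsx}\!\left( \int \bftheta\,\phi_d(\bsx - t\bftheta)\,\pi(\mathrm{d}\bftheta) \right) = -\,\mathrm{div}_{\bsx}\big( p_t(\bsx)\,\bsu_t(\bsx) \big).
\]
The last equality is Bayes' rule: the posterior of $\bfTheta$ given $\bsX(t)=\bsx$ has density proportional to $\phi_d(\bsx - t\bftheta)$ with respect to $\pi$. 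The interchange of derivative and integral is justified because, for $|t|\le T$, the $\bftheta$-integrand and its $t$- and $\bsx$-derivatives are dominated by $C(1+\|\bsx\|_2)\phi_d(\bsx - t\bftheta)$ with $\|t\bftheta\|_2\le TM$. Note also the pointwise bound
\[
|\partial_t p_t(\bsx)| \le M(\|\bsx\|_2 + |t|M)\,p_t(\bsx).
\]

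\textbf{Step 2: integrability.} We need all integrals in \eqref{eqn:lemma:gaussian_integration_by_parts_v2} to converge absolutely. This follows from the Gaussian tail
\[
p_t(\bsx) \le (2\pi)^{-d/2}\exp\!\big(-(\|\bsx\|_2 - |t|M)_+^2/2\big),
\]
together with the growth bound \eqref{eqn:lemma:gaussian_integration_by_parts_v1}, since $(1+\|\bsx\|_2)e^{c\|\bsx\|_2}$ is integrable against such a tail. Combined with $\|\bsu_t\|_2\le M$, both $\partial_t p_t\, g$ and $p_t\,\nabla^\top g\,\bsu_t$ are absolutely integrable.

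\textbf{Step 3: integrate by parts.} We now have
\[
\int \partial_t p_t\, g = -\int \mathrm{div}(p_t\bsu_t)\, g .
\]
Apply the divergence theorem on the ball $\bbB_2^d(R)$:
\[
-\int_{\bbB_2^d(R)} \mathrm{div}(p_t\bsu_t)\,g = \int_{\bbB_2^d(R)} p_t\,\nabla^\top g\,\bsu_t \;-\; \int_{\partial \bbB_2^d(R)} g\,p_t\,\bsu_t^\top \mathbf{n}\,\mathrm{d}S .
\]
The boundary term is bounded by
\[
L e^{cR}\, M\, (2\pi)^{-d/2} e^{-(R - |t|M)^2/2}\, |\partial \bbB_2^d(R)|,
\]
which tends to $0$ as $R\to\infty$ because the surface area grows only polynomially in $R$. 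Letting $R\to\infty$ and using dominated convergence, justified by Step 2, gives the identity.

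\textbf{Main obstacle.} The only delicate points are the regularity of $p_t\bsu_t$ (it is $\calC^1$, being $\int \bftheta\,\phi_d(\bsx - t\bftheta)\,\pi(\mathrm{d}\bftheta)$ with smooth integrand) and the uniform domination needed for differentiating under the integral. Both reduce to the compact support of $\pi$, so I expect the argument to go through without difficulty.
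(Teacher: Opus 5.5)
Your proof is correct, but it is organized differently from the paper's. You first integrate over the mixing measure to obtain the mixture-level continuity equation $\partial_t p_t = -\mathrm{div}_{\bsx}(p_t \bsu_t)$. You then integrate by parts once, truncating with the divergence theorem on $\bbB_2^d(R)$, and the surface term vanishes because the shifted Gaussian tail $p_t(\bsx) \le (2\pi)^{-d/2} e^{-(\|\bsx\|_2 - |t|M)_+^2/2}$ beats $e^{cR}$ times the polynomially growing surface area.

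The paper instead works with the single kernel $\kappa(\bsx;t,\bftheta) = \phi_d(\bsx - t\bftheta)$ for each fixed $\bftheta$, using $\partial_t \kappa = -\bftheta^\top \nabla_{\bsx}\kappa$. It integrates by parts against $g\chi_R$ with a smooth compactly supported cutoff $\chi_R$, so no boundary integrals or surface measure appear, only a term carrying $\nabla_{\bsx}\chi_R = O(1/R)$. Only at the end does it integrate over $\bftheta$ via Fubini, recognizing $\int \bftheta\,\kappa\,\pi(\mathrm{d}\bftheta) = p_t\bsu_t$. The joint integrability in $(\bsx,\bftheta)$ this requires comes from the $\bftheta$-uniform envelope $\phi_d(\bsx)e^{TM\|\bsx\|_2}$ and Lemma~\ref{lemma:gaussian_domination}.

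Your route needs two extra (easy) facts: that $p_t\bsu_t = \int \bftheta\,\phi_d(\bsx - t\bftheta)\,\pi(\mathrm{d}\bftheta)$ is $\calC^1$, and that the divergence can be passed under the $\pi$-integral. In exchange it is shorter and exposes the transport structure directly. Combined with Tweedie's formula $t\,p_t\bsu_t = \nabla_{\bsx}p_t + \bsx\,p_t$, your identity immediately gives the PDE $t\,\partial_t p_t = -\mathrm{div}_{\bsx}(\bsx\,p_t) - \Delta_{\bsx}p_t$ discussed in Section~\ref{sec:connection_de_bruijn_identity}.

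One cosmetic point: in Step 1, the dominating function for differentiating under the $\pi$-integral should not depend on $t$. Using $\phi_d \le (2\pi)^{-d/2}$ gives a bound that is constant in $\bftheta$ and uniform over $t$ and $\bsx$ in compact sets, as in Lemma~\ref{lemma:interpolation_parameter_differentiation_density_envelopes}(ii).
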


\subsection{Proof of the divergence derivative formulas}
\label{subsec:proof_divergence_derivative_formulas}

We now apply the two identities from Appendix \ref{subsec:basic_identities}. Finiteness and $\calC^1$-regularity, differentiation under the integral sign, and the exponential growth conditions required for integration by parts are justified in Appendix \ref{subsec:analytic_justifications}. Before embarking on the proofs, we recall the notion of the likelihood ratio $\rho_{t} := \frac{p_{t}}{q_{t}}$ for $t \in \bbR$.

\paragraph{Derivative of $J: \bbR \to [0, +\infty]$.}
To compute the derivative of $J: \bbR \to [0, +\infty]$, put
\[
    \begin{split}
        \calJ_{0} \left( t, \bsx \right) := \log \rho_{t} (\bsx) \cdot p_t (\bsx), \qquad \calJ_{1} \left( t, \bsx \right) := \log \rho_{t} (\bsx) \cdot q_t (\bsx)
    \end{split}
\]
for $\left( t, \bsx \right) \in \bbR \times \bbR^d$, and write $J: \bbR \to [0, +\infty]$ as
\begin{equation}
    \label{eqn:proof_subsec:proof_divergence_derivative_formulas_v1}
    \begin{split}
        J (t) = \int_{\bbR^d} \calJ_{0} \left( t, \bsx \right) \mathrm{d} \bsx - \int_{\bbR^d} \calJ_{1} \left( t, \bsx \right) \mathrm{d} \bsx, \qquad t \in \bbR.
    \end{split}
\end{equation}
Pointwise differentiation gives
\begin{equation}
    \label{eqn:proof_subsec:proof_divergence_derivative_formulas_v2}
    \begin{split}
        \partial_t \calJ_{0} \left( t, \bsx \right) = \ & \partial_{t} p_t (\bsx) \left\{ 1 + \log \rho_{t} (\bsx) \right\} - \rho_{t} (\bsx) \partial_{t} q_t (\bsx), \\
        \partial_t \calJ_{1} \left( t, \bsx \right) = \ & \partial_{t} q_t (\bsx) \left\{ \log \rho_{t} (\bsx) - 1 \right\} + \frac{\partial_{t} p_t (\bsx)}{\rho_{t} (\bsx)}.
    \end{split}
\end{equation}
We first evaluate the integral $\int_{\bbR^d} \partial_t \calJ_{0} \left( t, \bsx \right) \mathrm{d} \bsx$: since Lemma \ref{lemma:likelihood_ratio_exponential_growth_condition} verifies that $\log \rho_t : \bbR^d \to \bbR$ and $\rho_t : \bbR^d \to (0, +\infty)$ satisfy the exponential growth condition \eqref{eqn:lemma:gaussian_integration_by_parts_v1}, the Gaussian integration by parts identity \eqref{eqn:lemma:gaussian_integration_by_parts_v2} implies
\begin{equation}
    \label{eqn:proof_subsec:proof_divergence_derivative_formulas_v3}
    \begin{split}
        \int_{\bbR^d} \partial_t \calJ_{0} \left( t, \bsx \right) \mathrm{d} \bsx \stackrel{\textnormal{(a)}}{=} \ & \int_{\bbR^d} \partial_{t} p_t (\bsx) \log \rho_{t} (\bsx) \mathrm{d} \bsx - \int_{\bbR^d} \partial_{t} q_t (\bsx) \rho_{t} (\bsx) \mathrm{d} \bsx \\
        = \ & \int_{\bbR^d} p_t (\bsx) \underbrace{\nabla_{\bsx}^{\top} \log \rho_{t} (\bsx)}_{= \ t \bfDelta_{t}^{\top} (\bsx)} \bsu_{t} (\bsx) \mathrm{d} \bsx - \int_{\bbR^d} q_t (\bsx) \underbrace{\nabla_{\bsx}^{\top} \rho_{t} (\bsx)}_{= \ t \rho_{t} (\bsx) \bfDelta_{t}^{\top} (\bsx)} \bsv_{t} (\bsx) \mathrm{d} \bsx \\
        \stackrel{\textnormal{(b)}}{=} \ & t \int_{\bbR^d} p_t (\bsx) \bfDelta_{t}^{\top} (\bsx) \bsu_{t} (\bsx) \mathrm{d} \bsx - t \int_{\bbR^d} p_t (\bsx) \bfDelta_{t}^{\top} (\bsx) \bsv_{t} (\bsx) \mathrm{d} \bsx \\
        = \ & t \int_{\bbR^d} p_t (\bsx) \left\| \bfDelta_{t} (\bsx) \right\|_{2}^2 \mathrm{d} \bsx,
    \end{split}
\end{equation}
where the step (a) holds from $\int_{\bbR^d} \partial_{t} p_t (\bsx) \mathrm{d} \bsx = 0$ (the first piece of \eqref{eqn:proof_finiteness_regularity_divergence_curves_v1}) and the step (b) uses the third piece from \eqref{eqn:lemma:posterior_means_score_identities}. Next, we evaluate the integral $\int_{\bbR^d} \partial_t \calJ_{1} \left( t, \bsx \right) \mathrm{d} \bsx$: since Lemma \ref{lemma:likelihood_ratio_exponential_growth_condition} justifies the exponential growth conditions for $\log \rho_t : \bbR^d \to \bbR$ and $\frac{1}{\rho_t} : \bbR^d \to (0, +\infty)$, one can use the Gaussian integration by parts identity \eqref{eqn:lemma:gaussian_integration_by_parts_v2} to get
\begin{equation}
    \label{eqn:proof_subsec:proof_divergence_derivative_formulas_v4}
    \begin{split}
        \int_{\bbR^d} \partial_t \calJ_{1} \left( t, \bsx \right) \mathrm{d} \bsx \stackrel{\textnormal{(c)}}{=} \ & \int_{\bbR^d} \partial_{t} q_t (\bsx) \log \rho_{t} (\bsx) \mathrm{d} \bsx + \int_{\bbR^d} \frac{\partial_{t} p_t (\bsx)}{\rho_{t} (\bsx)} \mathrm{d} \bsx \\
        = \ & \int_{\bbR^d} q_t (\bsx) \underbrace{\nabla_{\bsx}^{\top} \log \rho_{t} (\bsx)}_{= \ t \bfDelta_{t}^{\top} (\bsx)} \bsv_{t} (\bsx) \mathrm{d} \bsx + \int_{\bbR^d} p_t (\bsx) \underbrace{\nabla_{\bsx}^{\top} \left( \frac{1}{\rho_{t}} \right) (\bsx)}_{= \ - \frac{t}{\rho_{t} (\bsx)} \bfDelta_{t}^{\top} (\bsx)} \bsu_{t} (\bsx) \mathrm{d} \bsx \\
        \stackrel{\textnormal{(d)}}{=} \ & t \int_{\bbR^d} q_t (\bsx) \bfDelta_{t}^{\top} (\bsx) \bsv_{t} (\bsx) \mathrm{d} \bsx - t \int_{\bbR^d} q_t (\bsx) \bfDelta_{t}^{\top} (\bsx) \bsu_{t} (\bsx) \mathrm{d} \bsx \\
        = \ & - t \int_{\bbR^d} q_t (\bsx) \left\| \bfDelta_{t} (\bsx) \right\|_{2}^2 \mathrm{d} \bsx,
    \end{split}
\end{equation}
where the step (c) arises from the fact  $\int_{\bbR^d} \partial_{t} q_t (\bsx) \mathrm{d} \bsx = 0$ (the second identity from \eqref{eqn:proof_finiteness_regularity_divergence_curves_v1}), and the step (d) follows due to the third identity from the equation \eqref{eqn:lemma:posterior_means_score_identities} together with the fact \eqref{eqn:lemma:likelihood_ratio_exponential_growth_condition_v1}. Taking two pieces \eqref{eqn:proof_subsec:proof_divergence_derivative_formulas_v3} and \eqref{eqn:proof_subsec:proof_divergence_derivative_formulas_v4} collectively together with Lemma \ref{lemma:dominated_differentiation} reveals that
\[
    \begin{split}
        J' (t) = \int_{\bbR^d} \partial_t \calJ_{0} \left( t, \bsx \right) \mathrm{d} \bsx - \int_{\bbR^d} \partial_t \calJ_{1} \left( t, \bsx \right) \mathrm{d} \bsx = t \int_{\bbR^d} \left\{ p_t (\bsx) + q_t (\bsx) \right\} \left\| \bfDelta_{t} (\bsx) \right\|_{2}^2 \mathrm{d} \bsx, \qquad t \in \bbR,
    \end{split}
\]
concluding the proof of the first identity from \eqref{eqn:lemma:proof_thm:dimension_free_comparison_divergence_derivatives_v1}.

\paragraph{Derivative of $C : \bbR \to [0, +\infty)$.}
We define $\calC : \bbR \times \bbR^d \to (0, +\infty)$ as $\calC \left( t, \bsx \right) := \frac{p_{t}^2 (\bsx)}{q_t (\bsx)}$ so that
\[
    \begin{split}
        C (t) = \int_{\bbR^d} \calC \left( t, \bsx \right) \mathrm{d} \bsx - 1, \qquad t \in \bbR.
    \end{split}
\]
Pointwise differentiation yields
\begin{equation}
    \label{eqn:proof_subsec:proof_divergence_derivative_formulas_v5}
    \begin{split}
        \partial_{t} \calC \left( t, \bsx \right) = \partial_{t} p_t (\bsx) \cdot \rho_{t} (\bsx) + p_t (\bsx) \cdot \partial_{t} \rho_{t} (\bsx) = 2 \partial_{t} p_t (\bsx) \cdot \rho_{t} (\bsx) - \partial_{t} q_t (\bsx) \cdot \rho_{t}^2 (\bsx).
    \end{split}
\end{equation}
Applying Lemma \ref{lemma:dominated_differentiation} and then the Gaussian integration by parts identity \eqref{eqn:lemma:gaussian_integration_by_parts_v2} gives
\[
    \begin{split}
        C' (t) 
        \stackrel{\textnormal{(e)}}{=} \ & 2 \int_{\bbR^d} 
        \partial_{t} p_t (\bsx) \cdot \rho_{t} (\bsx) \mathrm{d} \bsx - \int_{\bbR^d} 
        \partial_{t} q_t (\bsx) \cdot \rho_{t}^2 (\bsx) \mathrm{d} \bsx \\
        \stackrel{\textnormal{(f)}}{=} \ & 2 \int_{\bbR^d} 
        p_t (\bsx) \underbrace{\nabla_{\bsx}^{\top} \rho_{t} (\bsx)}_{= \ t \rho_{t} (\bsx) \bfDelta_{t}^{\top} (\bsx)} \bsu_{t} (\bsx) \mathrm{d} \bsx - \int_{\bbR^d} 
        q_t (\bsx) \underbrace{\nabla_{\bsx}^{\top} \left( \rho_{t}^2 \right) (\bsx)}_{= \ 2 t \rho_{t}^2 (\bsx) \bfDelta_{t}^{\top} (\bsx)} \bsv_{t} (\bsx) \mathrm{d} \bsx \\
        \stackrel{\textnormal{(g)}}{=} \ & 2 t \int_{\bbR^d} \calC \left( t, \bsx \right) \bfDelta_{t}^{\top} (\bsx) \bsu_{t} (\bsx) \mathrm{d} \bsx - 2 t \int_{\bbR^d} \calC \left( t, \bsx \right) \bfDelta_{t}^{\top} (\bsx) \bsv_{t} (\bsx) \mathrm{d} \bsx \\
        = \ & 2 t \int_{\bbR^d} \calC \left( t, \bsx \right) \left\| \bfDelta_{t} (\bsx)  \right\|_{2}^2 \mathrm{d} \bsx,
    \end{split}
\]
where the step (e) holds from the identity \eqref{eqn:proof_subsec:proof_divergence_derivative_formulas_v5}, the step (f) uses the Gaussian integration by parts identity \eqref{eqn:lemma:gaussian_integration_by_parts_v2}, and the step (g) is a result of \eqref{eqn:lemma:likelihood_ratio_exponential_growth_condition_v1}. The application of the Gaussian integration by parts \eqref{eqn:lemma:gaussian_integration_by_parts_v2} is justified via the exponential growth conditions for $\rho_t$ and $\rho_t^2$ from Lemma \ref{lemma:likelihood_ratio_exponential_growth_condition}. This establishes the formula for the derivative of the $\chi^2$-divergence in \eqref{eqn:lemma:proof_thm:dimension_free_comparison_divergence_derivatives_v1}.

\paragraph{Derivative of the Hellinger affinity and $D: \bbR \to [0, 2]$.}
Define the Hellinger affinity by
\begin{equation}
    \label{eqn:proof_subsec:proof_divergence_derivative_formulas_v6}
    \begin{split}
        A (t) := \int_{\bbR^d} \sqrt{p_{t} (\bsx) q_{t} (\bsx)} \mathrm{d} \bsx = 1 - \frac{D (t)}{2}, \qquad t \in \bbR.
    \end{split}
\end{equation}
Put $\calA \left( t, \bsx \right) := \sqrt{p_t (\bsx) q_t (\bsx)}$ for $\left( t, \bsx \right) \in \bbR \times \bbR^d$. Pointwise differentiation then gives
\begin{equation}
    \label{eqn:proof_subsec:proof_divergence_derivative_formulas_v7}
    \begin{split}
        \partial_{t} \calA \left( t, \bsx \right) = \frac{1}{2 \sqrt{\rho_{t} (\bsx)}} \partial_{t} p_t (\bsx) + \frac{\sqrt{\rho_{t} (\bsx)}}{2} \partial_{t} q_t (\bsx),
    \end{split}
\end{equation}
Differentiating under the integral sign (Lemma \ref{lemma:dominated_differentiation}) and integrating by parts, we obtain
\[
    \begin{split}
        A' (t) \stackrel{\textnormal{(h)}}{=} \ & \frac{1}{2} \int_{\bbR^d} \partial_{t} p_t (\bsx) \cdot \frac{1}{\sqrt{\rho_t (\bsx)}} \mathrm{d} \bsx + \frac{1}{2} \int_{\bbR^d} \partial_{t} q_t (\bsx) \sqrt{\rho_t (\bsx)} \mathrm{d} \bsx \\
        \stackrel{\textnormal{(i)}}{=} \ & \frac{1}{2} \int_{\bbR^d} p_t (\bsx) \underbrace{\nabla_{\bsx}^{\top} \left( \frac{1}{\sqrt{\rho_t}} \right) (\bsx)}_{= \ - \frac{t}{2 \sqrt{\rho_{t} (\bsx)}} \bfDelta_{t} (\bsx)} \bsu_{t} (\bsx) \mathrm{d} \bsx + \frac{1}{2} \int_{\bbR^d} q_t (\bsx) \underbrace{\nabla_{\bsx}^{\top} \sqrt{\rho_t} (\bsx)}_{= \ \frac{t}{2} \sqrt{\rho_{t} (\bsx)} \bfDelta_{t} (\bsx)} \bsv_{t} (\bsx) \mathrm{d} \bsx \\
        \stackrel{\textnormal{(j)}}{=} \ & - \frac{t}{4} \int_{\bbR^d} \sqrt{p_t (\bsx) q_t (\bsx)} \left\| \bfDelta_{t} (\bsx) \right\|_{2}^2 \mathrm{d} \bsx,
    \end{split}
\]
where the step (h) holds by the identity \eqref{eqn:proof_subsec:proof_divergence_derivative_formulas_v7}, the step (i) makes use of the Gaussian integration by parts identity \eqref{eqn:lemma:gaussian_integration_by_parts_v2}, and the step (j) comes from \eqref{eqn:lemma:likelihood_ratio_exponential_growth_condition_v1}. Here, application of the Gaussian integration by parts \eqref{eqn:lemma:gaussian_integration_by_parts_v2} is justified by the exponential growth conditions for $\sqrt{\rho_{t}}$ and $\frac{1}{\sqrt{\rho_{t}}}$, which follow from Lemma \ref{lemma:likelihood_ratio_exponential_growth_condition}.

\indent Finally, the relation $D = 2 - 2A$ implies
\[
    \begin{split}
        D' (t) = - 2 A' (t) = \frac{t}{2} \int_{\bbR^d} \sqrt{p_t (\bsx) q_t (\bsx)} \left\| \bfDelta_t (\bsx) \right\|_{2}^2 \mathrm{d} \bsx, 
    \end{split}
\]
This gives the formula for the derivative of the squared Hellinger distance in \eqref{eqn:lemma:proof_thm:dimension_free_comparison_divergence_derivatives_v1}. All three derivative calculations are now complete; we next elaborate on analytic justifications required for the calculations above.

\subsection{Analytic justifications}
\label{subsec:analytic_justifications}

We now supply the analytic details for Appendix \ref{subsec:proof_divergence_derivative_formulas}. We first establish the Gaussian domination bounds, dominated differentiation, and density envelopes bounds. These results justify the two supporting lemmas stated in Appendix \ref{subsec:basic_identities}. We then verify the likelihood-ratio growth conditions, and the finiteness and $\calC^1$-regularity for the divergence curves.

\subsubsection{Gaussian domination}
\label{subsubsec:gaussian_domination}

\begin{lemma} [Gaussian domination]
\label{lemma:gaussian_domination}
It holds for every $\left( c, k \right) \in [0, +\infty) \times \bbN_0$ that
\begin{equation}
    \label{eqn:lemma:gaussian_domination_v1}
    \begin{split}
        \int_{\bbR^d} \phi_d (\bsx) \left( 1 + \left\| \bsx \right\|_{2} \right)^k \exp \left( c \left\| \bsx \right\|_{2} \right) \mathrm{d} \bsx \leq 2^{k + \frac{d}{2}} k! \exp \left\{ (c+1)^2 \right\}.
    \end{split}
\end{equation}
In particular, for $k = 0$, the following sharper bound holds:
\begin{equation}
    \label{eqn:lemma:gaussian_domination_v2}
    \begin{split}
        \int_{\bbR^d} \phi_d (\bsx) \exp \left( c \left\| \bsx \right\|_{2} \right) \mathrm{d} \bsx \leq 2^{\frac{d}{2}} \exp \left( c^2 \right).
    \end{split}
\end{equation}
\end{lemma}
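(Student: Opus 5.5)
The idea is to separate the two factors in the integrand by completing the square in the Gaussian exponent, and then bound the remaining polynomial factor. I would prove the sharper bound \eqref{eqn:lemma:gaussian_domination_v2} first and then reduce \eqref{eqn:lemma:gaussian_domination_v1} to it.

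\textbf{The case $k=0$.} For $k = 0$ we have, pointwise,
\[
\phi_d(\bsx)\exp\left(c\|\bsx\|_2\right) = (2\pi)^{-d/2}\exp\left(-\tfrac{\|\bsx\|_2^2}{2} + c\|\bsx\|_2\right).
\]
Writing $-\tfrac{r^2}{2} + cr = -\tfrac{r^2}{4} - \tfrac{(r-2c)^2}{4} + c^2$ with $r = \|\bsx\|_2$ and dropping the nonpositive term $-\tfrac{(r-2c)^2}{4}$, I get
\[
-\tfrac{r^2}{2} + cr \leq c^2 - \tfrac{r^2}{4}.
\]
Therefore the integral is at most
\[
e^{c^2}(2\pi)^{-d/2}\int_{\bbR^d} e^{-\|\bsx\|_2^2/4}\,\mathrm{d}\bsx = e^{c^2}(2\pi)^{-d/2}(4\pi)^{d/2} = 2^{d/2}e^{c^2}.
\]
This is exactly \eqref{eqn:lemma:gaussian_domination_v2}.

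\textbf{General $k$.} I absorb the polynomial factor into an exponential using $s^k \leq k!\,e^{s}$ for $s \geq 0$, which is one term of the exponential series. Applying it with $s = 1 + \|\bsx\|_2$ gives
\[
(1+\|\bsx\|_2)^k \leq k!\,e^{1+\|\bsx\|_2}.
\]
The integrand is then bounded by $k!\,e\,\phi_d(\bsx)\exp\left((c+1)\|\bsx\|_2\right)$. Applying the $k=0$ bound with $c+1$ in place of $c$ yields
\[
\int_{\bbR^d}\phi_d(\bsx)(1+\|\bsx\|_2)^k e^{c\|\bsx\|_2}\,\mathrm{d}\bsx \leq e\,k!\,2^{d/2}e^{(c+1)^2}.
\]

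\textbf{Main obstacle.} The only delicate point is matching the stated constant $2^{k+d/2}k!\,e^{(c+1)^2}$. The bound above carries a factor $e$ instead of $2^k$, and $e \leq 2^k$ fails when $k \in \{0,1\}$.

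\emph{Fix for $k = 0$:} the claim follows directly from the sharper bound \eqref{eqn:lemma:gaussian_domination_v2}.

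\emph{Fix for all $k \geq 1$:} I would use a rescaled elementary inequality, $s^k \leq k!\,2^k e^{s/2}$. This is again a single term of the exponential series, now in the variable $s/2$. Then
\[
(1+\|\bsx\|_2)^k \leq k!\,2^k e^{1/2} e^{\|\bsx\|_2/2},
\]
and the $k=0$ bound with $c + \tfrac12$ in place of $c$ gives the total
\[
k!\,2^k\,2^{d/2}\,e^{1/2 + (c+1/2)^2}.
\]
Since $\tfrac12 + (c+\tfrac12)^2 = c^2 + c + \tfrac34 \leq (c+1)^2$, this is at most $2^{k+d/2}k!\,e^{(c+1)^2}$. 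This argument works uniformly for every $k \geq 0$, so the $k \in \{0,1\}$ issue disappears.
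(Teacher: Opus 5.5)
Your proof is correct and follows the paper's route: absorb the polynomial factor into an exponential via a single-term bound from the exponential series, then complete the square as $-\tfrac{r^2}{2}+cr\le c^2-\tfrac{r^2}{4}$ and evaluate the Gaussian integral as $(4\pi)^{d/2}$. The only difference is in how the polynomial is absorbed. The paper expands $(1+r)^k$ binomially and applies $r^j\le j!\,e^{r}$ termwise to get $(1+r)^k\le 2^k k!\,e^{r}$, which shifts $c$ to $c+1$ with no extra constant. You instead apply the rescaled bound $s^k\le 2^k k!\,e^{s/2}$ to $s=1+r$, which shifts $c$ only to $c+\tfrac12$ at the cost of a factor $e^{1/2}$, and so yields the slightly smaller exponent $c^2+c+\tfrac34\le (c+1)^2$.
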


\begin{proof} [Proof of Lemma \ref{lemma:gaussian_domination}]
To begin with, one can easily observe from the exponential series that
\begin{equation}
    \label{eqn:proof_lemma:gaussian_domination_v1}
    \begin{split}
        r^j \leq j! \exp (r), \qquad (r, j) \in [0, +\infty) \times \bbN_0.
    \end{split}
\end{equation}
Leveraging the basic inequality \eqref{eqn:proof_lemma:gaussian_domination_v1}, one has
\begin{equation}
    \label{eqn:proof_lemma:gaussian_domination_v2}
    \begin{split}
        (1 + r)^k = \sum_{j=0}^{k} \binom{k}{j} r^j \leq \exp (r) \sum_{j=0}^{k} \binom{k}{j} j! \leq k! \exp (r) \underbrace{\sum_{j=0}^{k} \binom{k}{j}}_{= \ 2^k} = 2^k k! \exp (r), \qquad (r, k) \in [0, +\infty) \times \bbN_0. 
    \end{split}
\end{equation}
Therefore, it follows that
\[
    \begin{split}
        \int_{\bbR^d} \phi_d (\bsx) \left( 1 + \left\| \bsx \right\|_{2} \right)^k \exp \left( c \left\| \bsx \right\|_{2} \right) \mathrm{d} \bsx \leq \ & 2^k k! \left( 2 \pi \right)^{- \frac{d}{2}} \int_{\bbR^d} \exp \left\{ - \frac{1}{2} \left\| \bsx \right\|_{2}^2 + (c+1) \left\| \bsx \right\|_{2} \right\} \mathrm{d} \bsx \\
        \stackrel{\textnormal{(a)}}{\leq} \ & 2^k k! \left( 2 \pi \right)^{- \frac{d}{2}} \exp \left\{ (c+1)^2 \right\} \underbrace{\int_{\bbR^d} \exp \left( - \frac{1}{4} \left\| \bsx \right\|_{2}^2 \right) \mathrm{d} \bsx}_{= \ \left( 4 \pi \right)^{\frac{d}{2}}} \\
        = \ & 2^{k + \frac{d}{2}} k! \exp \left\{ (c+1)^2 \right\},
    \end{split}
\]
which establishes the first desired inequality \eqref{eqn:lemma:gaussian_domination_v1}, where the step (a) makes use of the following simple bound: for any $r \in [0, +\infty)$, we have
\[
    \begin{split}
        - \frac{r^2}{2} + (c+1) r = - \frac{r^2}{4} - \left\{ \frac{r}{2} - (c+1) \right\}^2 + (c+1)^2 \leq - \frac{r^2}{4} + (c+1)^2.
    \end{split}
\]
For the second desired inequality \eqref{eqn:lemma:gaussian_domination_v2}, we find
\[
    \begin{split}
        \int_{\bbR^d} \phi_d (\bsx) \exp \left( c \left\| \bsx \right\|_{2} \right) \mathrm{d} \bsx = \ & \left( 2 \pi \right)^{- \frac{d}{2}} \int_{\bbR^d} \exp \left( - \frac{1}{2} \left\| \bsx \right\|_{2}^2 + c \left\| \bsx \right\|_{2} \right) \mathrm{d} \bsx \\
        \stackrel{\textnormal{(b)}}{\leq} \ & \left( 2 \pi \right)^{- \frac{d}{2}} \exp \left( c^2 \right) \underbrace{\int_{\bbR^d} \exp \left( - \frac{1}{4} \left\| \bsx \right\|_{2}^2 \right) \mathrm{d} \bsx}_{= \ \left( 4 \pi \right)^{\frac{d}{2}}} \\
        = \ & 2^{\frac{d}{2}} \exp \left( c^2 \right),
    \end{split}
\]
where the step (b) holds due to the following simple inequality: for any $r \in [0, +\infty)$, one has
\[
    \begin{split}
        - \frac{r^2}{2} + c r = - \frac{r^2}{4} - \left( \frac{r}{2} - c \right)^2 + c^2 \leq - \frac{r^2}{4} + c^2.
    \end{split}
\]
    
\end{proof}

\noindent In order to abbreviate explicit bounds from Lemma \ref{lemma:gaussian_domination}, we define
\[ 
    \begin{split}
        \calK \left( c; d, k \right) := 2^{k + \frac{d}{2}} k! \exp \left\{ (c+1)^2 \right\}, \qquad \calS \left( c; d \right) := 2^{\frac{d}{2}} \exp \left( c^2 \right).
    \end{split}
\]

\subsubsection{Dominated differentiation}
\label{subsubsec:dominated_differentiation}

Let us next present a useful result that provides sufficient conditions for when one can freely interchange the order of differentiation and integration.

\begin{lemma}
\label{lemma:dominated_differentiation}
Let $\nu : \calX \to [0, +\infty]$ be a $\sigma$-finite measure defined on a measurable space $\left( \bbX, \calX \right)$, $I \subseteq \bbR$ be a non-empty open interval, and $H : I \times \bbX \to \bbR$ be any $\left( \calB (I) \otimes \calX \right)$-measurable function. Suppose that there is a measurable subset $N \in \calX$ with $\nu (N) = 0$ satisfying the following properties:
\begin{enumerate} [label = (\roman*)]
    \item $H \left( \cdot, x \right) \in \calC^1 (I)$ for every $x \in \bbX \setminus N$.
    \item For each compact interval $K \subseteq I$, there exist measurable functions $P_K : \bbX \to [0, +\infty)$ and $Q_K : \bbX \to [0, +\infty)$ such that
    \begin{equation}
        \label{eqn:lemma:dominated_differentiation_v1}
        \begin{split}
            \int_{\bbX} \left\{ P_K (x) + Q_K (x) \right\} \nu \left( \mathrm{d} x \right) < +\infty
        \end{split}
    \end{equation}
    and
    \begin{equation}
        \label{eqn:lemma:dominated_differentiation_v2}
        \begin{split}
            \left| H (t, x) \right| \leq P_K (x), \qquad \left| \partial_t H (t, x) \right| \leq Q_K (x), \qquad (t, x) \in K \times \left( \bbX \setminus N \right).
        \end{split}
    \end{equation}
\end{enumerate}
Then, the function $F : I \to \bbR$, defined as
\[
    \begin{split}
        F (t) := \int_{\bbX} H (t, x) \nu \left( \mathrm{d} x \right), \qquad t \in I,
    \end{split}
\]
belongs to the class $\calC^1 (I)$ and its derivative is given by
\begin{equation}
    \label{eqn:lemma:dominated_differentiation_v3}
    \begin{split}
            F' (t) = \int_{\bbX} \partial_t H (t, x) \nu \left( \mathrm{d} x \right), \qquad t \in I.
    \end{split}
\end{equation}
\end{lemma}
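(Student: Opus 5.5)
The plan is to reduce the statement to the classical dominated convergence theorem combined with the fundamental theorem of calculus, working on compact subintervals.

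\emph{Well-definedness and continuity of the integrands.} Fix any $t \in I$ and pick a compact interval $K \subseteq I$ containing $t$. The bound $|H(t,x)| \leq P_K(x)$ with $P_K \in L^1(\nu)$ shows that $F(t)$ is finite. Define $F_1(s) := \int_{\bbX} \partial_t H(s,x)\,\nu(\mathrm{d}x)$ for $s \in I$. Measurability of $\partial_t H$ on $I \times (\bbX \setminus N)$ follows because it is a pointwise limit of the difference quotients $n\{H(s+1/n,x) - H(s,x)\}$, each of which is $(\calB(I)\otimes\calX)$-measurable. Moreover $|\partial_t H(s,x)| \leq Q_K(x)$ for $s \in K$, so $F_1(s)$ is finite. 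Continuity of $F_1$ on $K$ then follows from dominated convergence: for $s_n \to s$ inside $K$, we have $\partial_t H(s_n,x) \to \partial_t H(s,x)$ for $x \notin N$ by hypothesis (i), and $Q_K$ serves as the dominating function. Since every point of $I$ lies in the interior, relative to $I$, of some compact subinterval, $F_1$ is continuous on all of $I$.

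\emph{Identifying $F_1$ as the derivative.} Fix $a \in I$. For any $b \in I$, take $K = [\min(a,b), \max(a,b)]$. For $x \notin N$, the fundamental theorem of calculus gives $H(b,x) - H(a,x) = \int_a^b \partial_t H(s,x)\,\mathrm{d}s$. Integrate this in $x$ against $\nu$ and swap the order of integration using Fubini--Tonelli. This is legitimate because $\nu$ is $\sigma$-finite, the integrand is jointly measurable, and
\[
\int_{\bbX} \int_K |\partial_t H(s,x)|\,\mathrm{d}s\,\nu(\mathrm{d}x) \leq |b-a| \int_{\bbX} Q_K\,\mathrm{d}\nu < +\infty.
\]
The swap yields $F(b) - F(a) = \int_a^b F_1(s)\,\mathrm{d}s$. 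Because $F_1$ is continuous, the fundamental theorem of calculus then gives $F' = F_1$ on $I$, which is exactly \eqref{eqn:lemma:dominated_differentiation_v3}, and shows $F \in \calC^1(I)$.

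\emph{Main obstacle.} The only genuinely delicate point is measure-theoretic: verifying joint measurability of $\partial_t H$ so that Fubini applies, and keeping track of the null set $N$ throughout. The difference-quotient argument above handles measurability. On $N$, one can redefine $H$ to be $0$ without changing any of the integrals, which disposes of the exceptional set.
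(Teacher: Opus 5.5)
Your argument is correct, but it takes a different route from the paper.

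\textbf{The paper's route.} The paper works pointwise in $t$. For $K=[t-\delta,t+\delta]\subseteq I$, the mean-value theorem bounds the difference quotients $\{H(t+h,x)-H(t,x)\}/h$ by $Q_K(x)$ for $x\notin N$. Dominated convergence then gives $F'(t)=\int_{\bbX}\partial_t H(t,x)\,\nu(\mathrm{d}x)$ directly. A second dominated-convergence step, using $H(\cdot,x)\in\calC^1(I)$, gives continuity of $F'$.

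\textbf{Your route.} You prove continuity of $F_1$ first. You then integrate $H(b,x)-H(a,x)=\int_a^b\partial_t H(s,x)\,\mathrm{d}s$ against $\nu$, swap the integrals by Fubini, and recover $F'=F_1$ from the fundamental theorem of calculus.

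\textbf{What each buys.} The paper's argument is leaner. It needs only measurability of each section $x\mapsto\partial_t H(t,x)$, which comes for free as a pointwise limit of difference quotients. It needs no joint measurability of $\partial_t H$ and never uses the $\sigma$-finiteness of $\nu$, so that hypothesis is superfluous there. Your argument pays for Fubini with joint measurability and $\sigma$-finiteness. In return you get the integral representation $F(b)-F(a)=\int_a^b F_1(s)\,\mathrm{d}s$. You also get a clear separation of roles. The Fubini step alone needs only $\int_K\int_{\bbX}|\partial_t H|\,\mathrm{d}\nu\,\mathrm{d}s<\infty$, which already gives $F$ absolutely continuous with $F'=F_1$ almost everywhere. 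The $t$-uniform envelope $Q_K$ is needed only to make $F_1$ continuous and thus upgrade this to everywhere differentiability.

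\textbf{A small fix.} Near the right endpoint of $I$, the point $s+1/n$ may leave $I$. Set the $n$-th difference quotient to $0$ on $\{s\in I: s+1/n\notin I\}$. This keeps it $(\calB(I)\otimes\calX)$-measurable and does not change the pointwise limit.
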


\begin{proof} [Proof of Lemma \ref{lemma:dominated_differentiation}]
To begin with, fix any $t \in I$ and choose a sufficiently small $\delta \in (0, +\infty)$ so that $\left[ t - \delta, t + \delta \right] \subseteq I$. Set $K = \left[ t - \delta, t + \delta \right] \subseteq I$. For every $0 < |h| \leq \delta$, the mean-value theorem gives
\[  
    \begin{split}
        \left| \frac{H \left( t+h, x \right) - H (t, x)}{h} \right| \leq Q_K (x), \qquad x \in \bbX \setminus N.
    \end{split}
\]
Applying the dominated convergence theorem, one can conclude that
\[
    \begin{split}
        F' (t) = \ & \lim_{h \to 0} \int_{\bbX} \frac{H \left( t+h, x \right) - H (t, x)}{h} \nu \left( \mathrm{d} x \right) \\
        = \ & \lim_{h \to 0} \int_{\bbX} \frac{H \left( t+h, x \right) - H (t, x)}{h} \cdot \mathbbm{1}_{\bbX \setminus N} (x) \nu \left( \mathrm{d} x \right) \\
        = \ & \int_{\bbX} \underbrace{\lim_{h \to 0} \frac{H \left( t+h, x \right) - H (t, x)}{h} \cdot \mathbbm{1}_{\bbX \setminus N} (x)}_{= \ \partial_t H (t, x) \cdot \mathbbm{1}_{\bbX \setminus N} (x)} \nu \left( \mathrm{d} x \right) \\
        = \ & \int_{\bbX} \partial_t H (t, x) \nu \left( \mathrm{d} x \right),
    \end{split}
\]
which establishes the desired identity \eqref{eqn:lemma:dominated_differentiation_v3}. So, it only remains to prove the continuity of the derivative $F' : I \to \bbR$. Towards this end, we consider any sequence $\left\{ t_n : n \in \bbN \right\}$ in $I$ that converges to $t \in I$. Then, there exists some $N \in \bbN$ such that $t_n \in K$ for every $n \geq N$. 
Since the assumption \eqref{eqn:lemma:dominated_differentiation_v2} implies
\[
    \begin{split}
        \left| \partial_t H (t, x) \right| \cdot \mathbbm{1}_{\bbX \setminus N} (x) \leq Q_K (x), \qquad (t, x) \in K \times \bbX,
    \end{split}
\]
the dominated convergence theorem yields
\[
    \begin{split}
        \lim_{n \to \infty} F' \left( t_n \right) = \lim_{n \to \infty} \int_{\bbX} \partial_t H \left( t_n, x \right) \cdot \mathbbm{1}_{\bbX \setminus N} (x) \nu \left( \mathrm{d} x \right) \stackrel{\textnormal{(a)}}{=} \int_{\bbX} \partial_t H \left( t, x \right) \cdot \mathbbm{1}_{\bbX \setminus N} (x) \nu \left( \mathrm{d} x \right) = F' (t),
    \end{split}
\]
where the step (a) makes use of the assumption that $H \left( \cdot, x \right) \in \calC^1 (I)$ for every $x \in \bbX \setminus N$. This concludes the proof of the desired conclusion that $F \in \calC^1 (I)$.
    
\end{proof}

\subsubsection{Density envelopes and differentiation}
\label{subsubsec:density_envelopes_differentiation}

\begin{lemma} [Differentiation with respect to the interpolation parameter]
\label{lemma:interpolation_parameter_differentiation_density_envelopes}
Fix any $T \in (0, +\infty)$ and define a function $a_T : \bbR^d \to [0, +\infty)$ as
\begin{equation}
    \label{eqn:lemma:interpolation_parameter_differentiation_density_envelopes_v1}
    \begin{split}
        a_T (\bsx) := M \left\| \bsx \right\|_2 + T M^2.
    \end{split}
\end{equation}
Then, the following properties hold:
\begin{enumerate} [label = (\roman*)]
    \item It holds for any $\left( t, \bsx \right) \in [-T, T] \times \bbR^d$ that
    \begin{equation}
        \label{eqn:lemma:interpolation_parameter_differentiation_density_envelopes_v2}
        \begin{split}
            \phi_d (\bsx) \exp \left( - TM \left\| \bsx \right\|_2 - \frac{T^2 M^2}{2} \right) \leq \ & p_{t} (\bsx) \leq \phi_d (\bsx) \exp \left( TM \left\| \bsx \right\|_2 \right), \\
            \phi_d (\bsx) \exp \left( - TM \left\| \bsx \right\|_2 - \frac{T^2 M^2}{2} \right) \leq \ & q_{t} (\bsx) \leq \phi_d (\bsx) \exp \left( TM \left\| \bsx \right\|_2 \right).
        \end{split}
    \end{equation}
    \item The derivative with respect to the interpolation parameter is given by
    \begin{equation}
        \label{eqn:lemma:interpolation_parameter_differentiation_density_envelopes_v3}
        \begin{split}
            \partial_{t} p_{t} (\bsx) = \ & \int_{\bbB_{2}^d (M)} \left( \bftheta^{\top} \bsx - t \left\| \bftheta \right\|_{2}^2 \right) \phi_d \left( \bsx - t \bftheta \right) \pi \left( \mathrm{d} \bftheta \right), \\
            \partial_{t} q_{t} (\bsx) = \ & \int_{\bbB_{2}^d (M)} \left( \bftheta^{\top} \bsx - t \left\| \bftheta \right\|_{2}^2 \right) \phi_d \left( \bsx - t \bftheta \right) \eta \left( \mathrm{d} \bftheta \right).
        \end{split}
    \end{equation}
    In particular, it follows for every $\left( t, \bsx \right) \in [-T, T] \times \bbR^d$ that
    \begin{equation}
        \label{eqn:lemma:interpolation_parameter_differentiation_density_envelopes_v4}
        \begin{split}
            \left| \partial_{t} p_t (\bsx) \right| \leq a_{T} (\bsx) p_t (\bsx), \qquad \left| \partial_{t} q_t (\bsx) \right| \leq a_{T} (\bsx) q_t (\bsx).
        \end{split}
    \end{equation}
    \item The spatial derivatives of $p_{t}$ and $q_{t}$ are given by
    \begin{equation}
        \label{eqn:lemma:interpolation_parameter_differentiation_density_envelopes_v5}
        \begin{split}
            \nabla_{\bsx} p_t (\bsx) = \ & -\int_{\bbB_{2}^d (M)} \left( \bsx - t \bftheta \right) \phi_d \left( \bsx - t \bftheta \right) \pi \left( \mathrm{d} \bftheta \right), \\
            \nabla_{\bsx} q_t (\bsx) = \ & -\int_{\bbB_{2}^d (M)} \left( \bsx - t \bftheta \right) \phi_d \left( \bsx - t \bftheta \right) \eta \left( \mathrm{d} \bftheta \right)
        \end{split}
    \end{equation}
    \item The functions $\left( t, \bsx \right) \in \bbR \times \bbR^d \mapsto p_{t} (\bsx) \in [0, +\infty)$ and $\left( t, \bsx \right) \in \bbR \times \bbR^d \mapsto q_{t} (\bsx) \in [0, +\infty)$ are jointly $\calC^1$.
\end{enumerate}
\end{lemma}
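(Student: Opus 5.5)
The plan is to work directly with the integral representation $p_t(\bsx) = \int \phi_d(\bsx - t\bftheta)\,\pi(\mathrm{d}\bftheta)$. Everything rests on the pointwise factorization
\[
\phi_d(\bsx - t\bftheta) = \phi_d(\bsx)\exp\left( t\bftheta^{\top}\bsx - \tfrac{t^2}{2}\|\bftheta\|_2^2 \right).
\]
All statements for $q_t$ follow verbatim with $\eta$ in place of $\pi$, so I argue only for $p_t$.

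For (i), fix $(t,\bsx) \in [-T,T]\times\bbR^d$ and $\bftheta \in \bbB_2^d(M)$. By Cauchy--Schwarz, $|t\bftheta^{\top}\bsx| \le TM\|\bsx\|_2$, and $0 \le \tfrac{t^2}{2}\|\bftheta\|_2^2 \le \tfrac{T^2M^2}{2}$. Hence the exponent lies between $-TM\|\bsx\|_2 - \tfrac{T^2M^2}{2}$ and $TM\|\bsx\|_2$. Integrating against the probability measure $\pi$ gives \eqref{eqn:lemma:interpolation_parameter_differentiation_density_envelopes_v2}.

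For (ii) and (iii), I would differentiate under the integral sign using Lemma \ref{lemma:dominated_differentiation}, with $\nu = \pi$, the open interval $I = (-T', T')$ for any $T' > T$, and $H(t,\bftheta) = \phi_d(\bsx - t\bftheta)$ at fixed $\bsx$. The $t$-derivative of the integrand is $(\bftheta^{\top}\bsx - t\|\bftheta\|_2^2)\,\phi_d(\bsx - t\bftheta)$. On a compact $K$ it is bounded, uniformly in $\bftheta \in \bbB_2^d(M)$, by a constant depending on $\bsx$ and $K$, since $\phi_d \le (2\pi)^{-d/2}$. Constants are $\pi$-integrable because $\pi$ is a probability measure, so the lemma yields \eqref{eqn:lemma:interpolation_parameter_differentiation_density_envelopes_v3}. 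For the envelope \eqref{eqn:lemma:interpolation_parameter_differentiation_density_envelopes_v4}, I bound the prefactor pointwise by $|\bftheta^{\top}\bsx| + |t|\|\bftheta\|_2^2 \le M\|\bsx\|_2 + TM^2 = a_T(\bsx)$. The remaining factor $\phi_d(\bsx - t\bftheta)$ is nonnegative, so integrating gives $|\partial_t p_t| \le a_T p_t$.

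The spatial gradient in (iii) is handled the same way, one coordinate $x_i$ at a time, with all other variables frozen. Here $\partial_{x_i}\phi_d(\bsx - t\bftheta) = -(x_i - t\theta_i)\phi_d(\bsx - t\bftheta)$. Because $|x_i - t\theta_i| \le \|\bsx\|_2 + TM$, this derivative is bounded uniformly for $\bsx$ in a compact neighborhood and $\bftheta \in \bbB_2^d(M)$.

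For (iv), the partial derivatives of $(t,\bsx) \mapsto p_t(\bsx)$ are the integrals of $g(t,\bsx,\bftheta) := (\bftheta^{\top}\bsx - t\|\bftheta\|_2^2)\phi_d(\bsx - t\bftheta)$ and of $-(\bsx - t\bftheta)\phi_d(\bsx - t\bftheta)$. Each integrand is jointly continuous in $(t,\bsx)$ and bounded on compact sets uniformly in $\bftheta$. Dominated convergence therefore makes each partial derivative continuous in $(t,\bsx)$. A function whose partial derivatives all exist and are continuous is $\calC^1$, which proves joint $\calC^1$ regularity.

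I do not expect a real obstacle. The one point needing care is to produce dominating functions that are uniform over a neighborhood in $(t,\bsx)$ jointly, not just in $t$, before claiming joint continuity. Compactness of $\bbB_2^d(M)$ together with $\phi_d \le (2\pi)^{-d/2}$ makes this immediate.
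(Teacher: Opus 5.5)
Your proposal is correct and follows essentially the same route as the paper. It uses the Gaussian shift factorization with Cauchy--Schwarz for (i), and Lemma~\ref{lemma:dominated_differentiation} with dominating functions that are constant in $\bftheta$ (via $\phi_d \le (2\pi)^{-d/2}$) for both the $t$-derivative in (ii) and the coordinate-wise gradient in (iii). It then uses dominated convergence for continuity of the partial derivatives in (iv). You also make explicit the pointwise bound $|\bftheta^{\top}\bsx - t\|\bftheta\|_2^2| \le a_T(\bsx)$, which gives \eqref{eqn:lemma:interpolation_parameter_differentiation_density_envelopes_v4} and which the paper's proof leaves implicit.
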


\begin{proof} [Proof of Lemma \ref{lemma:interpolation_parameter_differentiation_density_envelopes}]
Here, we only establish the results from Lemma \ref{lemma:interpolation_parameter_differentiation_density_envelopes} for $\pi$; the analogous results for $\eta$ immediately follows by using exactly the same arguments.
\medskip

\indent (\romannumeral 1) One can easily observe the following Gaussian shift identity:
\begin{equation}
    \label{eqn:proof_lemma:interpolation_parameter_differentiation_density_envelopes_v1}
    \begin{split}
        \phi_d \left( \bsx - t \bftheta \right) = \phi_d (\bsx) \exp \left( t \bftheta^{\top} \bsx - \frac{t^2 \left\| \bftheta \right\|_{2}^2}{2} \right), \qquad \left( t, \bsx, \bftheta \right) \in \bbR \times \bbR^d \times \bbR^d.
    \end{split}
\end{equation}
For any $\left( t, \bftheta \right) \in [-T, T] \times \bbB_{2}^d (M)$, we have
\begin{equation}
    \label{eqn:proof_lemma:interpolation_parameter_differentiation_density_envelopes_v2}
    \begin{split}
        - TM \left\| \bsx \right\|_{2} - \frac{T^2 M^2}{2} \leq t \bftheta^{\top} \bsx - \frac{t^2 \left\| \bftheta \right\|_{2}^2}{2} \leq TM \left\| \bsx \right\|_{2}, \qquad \bsx \in \bbR^d.
    \end{split}
\end{equation}
Combining two pieces \eqref{eqn:proof_lemma:interpolation_parameter_differentiation_density_envelopes_v1} and \eqref{eqn:proof_lemma:interpolation_parameter_differentiation_density_envelopes_v2} and then integrating over $\bftheta \in \bbB_{2}^d (M)$ with respect to $\pi$ finishes the proof of the part (\romannumeral 1).
\medskip

\indent (\romannumeral 2) For each fixed $\bsx \in \bbR^d$, let $V_{\bsx} : \bbR \times \bbB_{2}^d (M) \to \bbR$ be defined by
\[
    \begin{split}
        V_{\bsx} \left( t, \bftheta \right) := \phi_d \left( \bsx - t \bftheta \right).
    \end{split}
\]
We now choose any compact interval $K \subseteq \bbR$ and any sufficiently large $A \in (0, +\infty)$ such that $K \subseteq \left[ -A, A \right]$. Then, one can observe that
\[
    \begin{split}
        \left| V_{\bsx} \left( t, \bftheta \right) \right| \leq \left( 2 \pi \right)^{- \frac{d}{2}}, \qquad \left( t, \bftheta \right) \in K \times \bbB_{2}^d (M), 
    \end{split}
\]
and
\[
    \begin{split}
        \left| \partial_t V_{\bsx} \left( t, \bftheta \right) \right| \leq \left( 2 \pi \right)^{- \frac{d}{2}} \left( M \left\| \bsx \right\|_{2} + A M^2 \right), \qquad \left( t, \bftheta \right) \in K \times \bbB_{2}^d (M).
    \end{split}
\]
Here, note that the dominating integrands $\left( 2 \pi \right)^{- \frac{d}{2}}$ and $\left( 2 \pi \right)^{- \frac{d}{2}} \left( M \left\| \bsx \right\|_{2} + A M^2 \right)$ are both constants of $\bftheta \in \bbB_{2}^d (M)$, thereby $\pi$-integrable. Therefore, we may apply Lemma \ref{lemma:dominated_differentiation} to obtain
\[
    \begin{split}
        \partial_{t} p_{t} (\bsx) = \partial_t \int_{\bbB_{2}^d (M)} V_{\bsx} \left( t, \bftheta \right) \pi \left( \mathrm{d} \bftheta \right) = \int_{\bbB_{2}^d (M)} \partial_t V_{\bsx} \left( t, \bftheta \right) \pi \left( \mathrm{d} \bftheta \right) = \int_{\bbB_{2}^d (M)} \left( \bftheta^{\top} \bsx - t \left\| \bftheta \right\|_{2}^2 \right) \phi_d \left( \bsx - t \bftheta \right) \pi \left( \mathrm{d} \bftheta \right),
    \end{split}
\]
as desired in \eqref{eqn:lemma:interpolation_parameter_differentiation_density_envelopes_v3}. Also, Lemma \ref{lemma:dominated_differentiation} implies that $t \in \bbR \mapsto p_{t} (\bsx) \in [0, +\infty)$ belongs to the class $\calC^1 (\bbR)$.
\medskip

\indent (\romannumeral 3) Similar to the part (\romannumeral 2), for each fixed $\left( t, i \right) \in \bbR \times [d]$ and $\bsx_{-i} = \left( x_j : j \in [d] \setminus \{ i \} \right) \in \bbR^{[d] \setminus \{ i \}}$, let us consider the function $W_{t}^{(i)} : \bbR \times \bbB_{2}^d (M) \to \bbR$ defined as
\[
    \begin{split}
        W_{\left( t, \bsx_{-i} \right)}^{(i)}  \left( x_i, \bftheta \right) := \phi_d \left( \bsx - t \bftheta \right).
    \end{split}
\]
Next, we choose any compact interval $K' \subseteq \bbR$ and any sufficiently large $B \in (0, +\infty)$ so that $K' \subseteq \left[ -B, B \right]$. Then, we have
\[
    \begin{split}
        \left| W_{\left( t, \bsx_{-i} \right)}^{(i)} \left( x_i, \bftheta \right) \right| \leq \left( 2 \pi \right)^{- \frac{d}{2}}, \qquad \left( x_i, \bftheta \right) \in K' \times \bbB_{2}^d (M), 
    \end{split}
\]
and
\[
    \begin{split}
        \left| \partial_{x_i} W_{\left( t, \bsx_{-i} \right)}^{(i)} \left( x_i, \bftheta \right) \right| \leq \left( 2 \pi \right)^{- \frac{d}{2}} \left( B + |t| M \right), \qquad \left( x_i, \bftheta \right) \in K' \times \bbB_{2}^d (M). 
    \end{split}
\]
Noticing that the dominating integrands $\left( 2 \pi \right)^{- \frac{d}{2}}$ and $\left( 2 \pi \right)^{- \frac{d}{2}} \left( B + |t| M \right)$ are both constants of $\bftheta \in \bbB_{2}^d (M)$, one can leverage Lemma \ref{lemma:dominated_differentiation} to reveal that
\begin{equation}
    \label{eqn:proof_lemma:interpolation_parameter_differentiation_density_envelopes_v3}
    \begin{split}
        \partial_{x_i} p_{t} (\bsx) = \ & \partial_{x_i} \int_{\bbB_{2}^d (M)} W_{\left( t, \bsx_{-i} \right)}^{(i)} \left( x_i, \bftheta \right) \pi \left( \mathrm{d} \bftheta \right) \\
        = \ & \int_{\bbB_{2}^d (M)} \partial_{x_i} W_{\left( t, \bsx_{-i} \right)}^{(i)} \left( x_i, \bftheta \right) \pi \left( \mathrm{d} \bftheta \right) \\
        = \ & - \int_{\bbB_{2}^d (M)} \left( x_i - t \theta_i \right) \phi_d \left( \bsx - t \bftheta \right) \pi \left( \mathrm{d} \bftheta \right).
    \end{split}
\end{equation}
By aggregating the result \eqref{eqn:proof_lemma:interpolation_parameter_differentiation_density_envelopes_v3} over $i \in [d]$, we obtain the desired identity \eqref{eqn:lemma:interpolation_parameter_differentiation_density_envelopes_v5} and conclude that the function $\bsx \in \bbR^d \mapsto p_{t} (\bsx) \in [0, +\infty)$ belongs to the class $\calC^1 \left( \bbR^d \right)$.
\medskip

\indent (\romannumeral 4) Employing the dominated convergence theorem, one can easily reveal that the functions 
\[
    \begin{split}
        \left( t, \bsx \right) \in \bbR \times \bbR^d \qquad \mapsto & \qquad \partial_{t} p_{t} (\bsx) \in \bbR, \\
        \left( t, \bsx \right) \in \bbR \times \bbR^d \qquad \mapsto & \qquad \nabla_{\bsx} p_{t} (\bsx) \in \bbR^d
    \end{split}
\]
are all continuous. Hence, the function $\left( t, \bsx \right) \in \bbR \times \bbR^d \mapsto p_{t} (\bsx) \in [0, +\infty)$ belongs to the class $\calC^1 \left( \bbR \times \bbR^d \right)$.
\end{proof}

\indent We are now ready to present the proof for the Gaussian integration by parts identity stated in Lemma \ref{lemma:gaussian_integration_by_parts}.

\subsubsection{Proof of Lemma \ref{lemma:gaussian_integration_by_parts}}
\label{subsec:proof_lemma:gaussian_integration_by_parts}

\indent Here, we only prove the results of Lemma \ref{lemma:gaussian_integration_by_parts} for $\pi$; the analogous results for $\eta$ immediately follows by using exactly the same arguments. We fix any $t \in \bbR$ and choose $T \in (0, +\infty)$ so that $T > |t|$. For each fixed $\bftheta \in \bbB_{2}^{d} (M)$, put
\[
    \begin{split}
        \kappa \left( \bsx; t, \bftheta \right) := \phi_d \left( \bsx - t \bftheta \right).
    \end{split}
\]
One can easily observe that
\begin{equation}
    \label{eqn:proof_lemma:gaussian_integration_by_parts_v1}
    \begin{split}
        \partial_t \kappa \left( \bsx; t, \bftheta \right) = \ & \left( \bftheta^{\top} \bsx - t \left\| \bftheta \right\|_{2}^2 \right) \kappa \left( \bsx; t, \bftheta \right), \\
        \nabla_{\bsx} \kappa \left( \bsx; t, \bftheta \right) = \ & - \left( \bsx - t \bftheta \right) \kappa \left( \bsx; t, \bftheta \right),
    \end{split}
\end{equation}
which produces
\begin{equation}
    \label{eqn:proof_lemma:gaussian_integration_by_parts_v2}
    \begin{split}
        \partial_t \kappa \left( \bsx; t, \bftheta \right) = - \bftheta^{\top} \nabla_{\bsx} \kappa \left( \bsx; t, \bftheta \right).
    \end{split}
\end{equation}
Also, recall from the equations \eqref{eqn:proof_lemma:interpolation_parameter_differentiation_density_envelopes_v1} and \eqref{eqn:proof_lemma:interpolation_parameter_differentiation_density_envelopes_v2} that
\begin{equation}
    \label{eqn:proof_lemma:gaussian_integration_by_parts_v3}
    \begin{split}
        \phi_d (\bsx) \exp \left( - TM \left\| \bsx \right\|_{2} - \frac{T^2 M^2}{2} \right) \leq \kappa \left( \bsx; t, \bftheta \right) = \phi_d \left( \bsx - t \bftheta \right) \leq \phi_d (\bsx) \exp \left( TM \left\| \bsx \right\|_{2} \right)
    \end{split}
\end{equation}
for every $\left( t, \bsx, \bftheta \right) \in [-T, T] \times \bbR^d \times \bbB_{2}^{d} (M)$. Equipped with the above preliminaries, let us first establish the absolute integrability of the functions
\begin{equation}
    \label{eqn:proof_lemma:gaussian_integration_by_parts_v4}
    \begin{split}
        \bsx \in \bbR^d \qquad \mapsto & \qquad \partial_{t} f_{t}^{G} (\bsx) g (\bsx) \in \bbR, \\
        \bsx \in \bbR^d \qquad \mapsto & \qquad p_{t} (\bsx) \cdot \nabla_{\bsx}^{\top} g (\bsx) \bbE_{G} \bsu_{t} (\bsx) \in \bbR.
    \end{split}
\end{equation}
To this end, we obtain the following bound by combining the inequality \eqref{eqn:proof_lemma:gaussian_integration_by_parts_v3} together with the assumption \eqref{eqn:lemma:gaussian_integration_by_parts_v1} on the function $g \in \calC^1 \left( \bbR^d \right)$: for each fixed $\bftheta \in \bbB_{2}^{d} (M)$,
\begin{equation}
    \label{eqn:proof_lemma:gaussian_integration_by_parts_v5}
    \begin{split}
        \kappa \left( \bsx; t, \bftheta \right) \left| g (\bsx) \right| \leq L \phi_d (\bsx) \exp \left\{ \left( c + TM \right) \left\| \bsx \right\|_{2} \right\}, \qquad \bsx \in \bbR^d.
    \end{split}
\end{equation}
Thus, by using the derivative of $\kappa \left( \bsx; t, \bftheta \right)$ with respect to the interpolation parameter from \eqref{eqn:proof_lemma:gaussian_integration_by_parts_v1}, we have
\[
    \begin{split}
        \left| \partial_{t} \kappa \left( \bsx; t, \bftheta \right) g (\bsx) \right| = \ & \left| \bftheta^{\top} \bsx - t \left\| \bftheta \right\|_{2}^2 \right| \cdot \kappa \left( \bsx; t, \bftheta \right) \left| g (\bsx) \right| \\
        \stackrel{\textnormal{(a)}}{\leq} \ & \left( M \left\| \bsx \right\|_{2} + TM^2 \right) \cdot L \phi_d (\bsx) \exp \left\{ \left( c + TM \right) \left\| \bsx \right\|_{2} \right\} \\
        \leq \ & TM^2 L \cdot \phi_d (\bsx) \exp \left\{ \left( c + TM \right) \left\| \bsx \right\|_{2} \right\} + ML \phi_d (\bsx) 
        \left( 1 + \left\| \bsx \right\|_{2} \right) \exp \left\{ \left( c + TM \right) \left\| \bsx \right\|_{2} \right\},
    \end{split}
\]
where the step (a) follows since $|t| < T$ and $\left\| \bftheta \right\|_{2} \leq M$ together with the Cauchy-Schwarz inequality. This yields
\begin{equation}
    \label{eqn:proof_lemma:gaussian_integration_by_parts_v6}
    \begin{split}
        \left| \partial_{t} p_{t} (\bsx) (\bsx) g (\bsx) \right| = \ & \left| \partial_{t} \int_{\bbB_{2}^{d} (M)} \kappa \left( \bsx; t, \bftheta \right) \pi \left( \mathrm{d} \bftheta \right) \cdot g (\bsx) \right| \\
        = \ & \left| \int_{\bbB_{2}^{d} (M)} \partial_{t} \kappa \left( \bsx; t, \bftheta \right) \pi \left( \mathrm{d} \bftheta \right) \cdot g (\bsx) \right| \\
        \leq \ & \int_{\bbB_{2}^{d} (M)} \left| \partial_{t} \kappa \left( \bsx; t, \bftheta \right) g (\bsx) \right| \pi \left( \mathrm{d} \bftheta \right) \\
        \leq \ & TM^2 L \cdot \phi_d (\bsx) \exp \left\{ \left( c + TM \right) \left\| \bsx \right\|_{2} \right\} + ML \phi_d (\bsx) 
        \left( 1 + \left\| \bsx \right\|_{2} \right) \exp \left\{ \left( c + TM \right) \left\| \bsx \right\|_{2} \right\}.
    \end{split}
\end{equation}
Since Lemma \ref{lemma:gaussian_domination} implies
\begin{equation}
    \label{eqn:proof_lemma:gaussian_integration_by_parts_v7}
    \begin{split}
        \int_{\bbR^d} \phi_d (\bsx) \exp \left\{ \left( c + TM \right) \left\| \bsx \right\|_{2} \right\} \mathrm{d} \bsx \leq \ & \calS \left( c + TM; d \right), \\
        \int_{\bbR^d} \phi_d (\bsx) \left( 1 + \left\| \bsx \right\|_{2} \right) \exp \left\{ \left( c + TM \right) \left\| \bsx \right\|_{2} \right\} \mathrm{d} \bsx \leq \ & \calK \left( c + TM; d, 1 \right),
    \end{split}
\end{equation}
the inequality \eqref{lemma:gaussian_domination} proves the absolute integrability of the function $\bsx \in \bbR^d \mapsto \partial_{t} f_{t}^{G} (\bsx) (\bsx) g (\bsx) \in \bbR$. Likewise, using the spatial derivative of $\kappa \left( \bsx; t, \bftheta \right)$ from \eqref{eqn:proof_lemma:gaussian_integration_by_parts_v1}, one can find that
\[
    \begin{split}
        \left| \kappa \left( \bsx; t, \bftheta \right) \cdot \nabla_{\bsx}^{\top} g (\bsx) \bsu_{t} (\bsx) \right|
        \stackrel{\textnormal{(b)}}{\leq} \ & M \kappa \left( \bsx; t, \bftheta \right) \left\| \nabla_{\bsx} g (\bsx) \right\|_{2} \\
        \leq \ & ML \phi_d (\bsx) \exp \left\{ \left( c + TM \right) \left\| \bsx \right\|_{2} \right\},
    \end{split}
\]
where the step (b) holds due to the Cauchy-Schwarz inequality with $\left\| \bsu_{t} (\bsx) \right\|_{2} \leq M$. The above bound tells us that
\begin{equation}
    \label{eqn:proof_lemma:gaussian_integration_by_parts_v8}
    \begin{split}
        \left| p_{t} (\bsx) \cdot \nabla_{\bsx}^{\top} g (\bsx) \bsu_{t} (\bsx) \right| = \ & \left| \int_{\bbB_{2}^{d} (M)} \kappa \left( \bsx; t, \bftheta \right) \pi \left( \mathrm{d} \bftheta \right) \cdot \nabla_{\bsx}^{\top} g (\bsx) \bsu_{t} (\bsx) \right| \\
        \leq \ & \int_{\bbB_{2}^{d} (M)} \left| \kappa \left( \bsx; t, \bftheta \right) \cdot \nabla_{\bsx}^{\top} g (\bsx) \bsu_{t} (\bsx) \right| \pi \left( \mathrm{d} \bftheta \right) \\
        \leq \ & ML \phi_d (\bsx) \exp \left\{ \left( c + TM \right) \left\| \bsx \right\|_{2} \right\},
    \end{split}
\end{equation}
which verifies the absolute integrability of the function $\bsx \in \bbR^d \mapsto p_{t} (\bsx) \cdot \nabla_{\bsx}^{\top} g (\bsx) \bsu_{t} (\bsx) \in \bbR$ in conjunction with the first observation from \eqref{eqn:proof_lemma:gaussian_integration_by_parts_v7}.
\medskip

\indent Next, we turn our attention to establishing the Gaussian integration by parts identity \eqref{eqn:lemma:gaussian_integration_by_parts_v2}. In order to justify the integration by parts argument without assuming boundary decay implicitly, we define
\begin{equation}
    \label{eqn:proof_lemma:gaussian_integration_by_parts_v9}
    \begin{split}
        \psi (t) :=
        \begin{cases}
            1 & \textnormal{if } 0 \leq t < 1; \\
            1 - 3 (t-1)^2 + 2 (t-1)^3 & \textnormal{if } 1 \leq t < 2; \\
            0 & \textnormal{if } t \geq 2,
        \end{cases} \qquad
        \chi_{R} (\bsx) := \psi \left( \frac{\left\| \bsx \right\|_{2}}{R} \right), \qquad R \in (0, +\infty).
    \end{split}
\end{equation}
One can easily reveal that the function $\chi_{R} : \bbR^d \to \bbR$ is a compactly supported $\calC^1$-function: it is constant near the origin, so the lack of differentiability of the norm function $\left\| \cdot \right\|_{2}$ at zero becomes immaterial. Since
\[
    \begin{split}
        \left| \psi' (t) \right| = 6 (t-1) \left\{ 1 - (t-1) \right\} \leq \frac{3}{2}, \qquad t \in (1, 2),
    \end{split}
\]
it follows that
\begin{equation}
    \label{eqn:proof_lemma:gaussian_integration_by_parts_v10}
    \begin{split}
        0 \leq \chi_{R} \leq 1, \qquad \chi_{R} (\bsx) = 1 \textnormal{ for } \bsx \in \bbB_{2}^{d} (R), \qquad \chi_{R} (\bsx) = 0 \textnormal{ for } \bsx \in \bbR^d \setminus \bbB_{2}^{d} \left( 2R \right),
    \end{split}
\end{equation}
and
\begin{equation}
    \label{eqn:proof_lemma:gaussian_integration_by_parts_v11}
    \begin{split}
        \left\| \nabla_{\bsx} \chi_{R} (\bsx) \right\|_{2} \leq \frac{3}{2R}, \qquad \bsx \in \bbR^d.
    \end{split}
\end{equation}
Coordinate-wise integration by parts for the compactly supported $\calC^1$-function $g \chi_R$ then leads to
\begin{equation}
    \label{eqn:proof_lemma:gaussian_integration_by_parts_v12}
    \begin{split}
        - \int_{\bbR^d} \bftheta^{\top} \nabla_{\bsx} \kappa \left( \bsx; t, \bftheta \right) g (\bsx) \chi_{R} (\bsx) \mathrm{d} \bsx = \ & \int_{\bbR^d} \kappa \left( \bsx; t, \bftheta \right) \bftheta^{\top} \nabla_{\bsx} \left( g \chi_{R} \right) (\bsx) \mathrm{d} \bsx \\
        = \ & \underbrace{\int_{\bbR^d} \kappa \left( \bsx; t, \bftheta \right) \chi_{R} (\bsx) \bftheta^{\top} \nabla_{\bsx} g (\bsx) \mathrm{d} \bsx}_{=: \ \textnormal{(T1)}} \\
        &+ \underbrace{\int_{\bbR^d} \kappa \left( \bsx; t, \bftheta \right) g (\bsx) \bftheta^{\top} \nabla_{\bsx} \chi_{R} (\bsx) \mathrm{d} \bsx}_{=: \ \textnormal{(T2)}}.
    \end{split}
\end{equation}
Note that the term (T2) has absolute value at most
\[
    \begin{split}
        \left| \textnormal{(T2)} \right| \stackrel{\textnormal{(c)}}{\leq} \frac{3M}{2R} \int_{\bbR^d} \kappa \left( \bsx; t, \bftheta \right) \left| g (\bsx) \right| \mathrm{d} \bsx \stackrel{\textnormal{(d)}}{\leq} \frac{3ML}{2R} \calS \left( c + TM; d \right) \stackrel{R \to \infty}{\longrightarrow} 0,
    \end{split}
\]
where the step (c) follows from the Cauchy-Schwarz inequality along with the fact \eqref{eqn:proof_lemma:gaussian_integration_by_parts_v11}, and the step (d) leverages the inequality \eqref{eqn:proof_lemma:gaussian_integration_by_parts_v5} in conjunction with \eqref{eqn:proof_lemma:gaussian_integration_by_parts_v7}. Invoking the absolute integrability of the functions
\[
    \begin{split}
        \bsx \in \bbR^d \qquad \mapsto & \qquad \bftheta^{\top} \nabla_{\bsx} \kappa \left( \bsx; t, \bftheta \right) g (\bsx) \in \bbR, \\
        \bsx \in \bbR^d \qquad \mapsto & \qquad \kappa \left( \bsx; t, \bftheta \right) \bftheta^{\top} \nabla_{\bsx} g (\bsx) \in \bbR
    \end{split}
\]
for each fixed $\bftheta \in \bbB_{2}^{d} (M)$, one can apply the dominated convergence theorem to the identity \eqref{eqn:proof_lemma:gaussian_integration_by_parts_v12} to conclude in conjunction with the fact $\textnormal{(T2)} \stackrel{R \to \infty}{\longrightarrow} 0$ that
\begin{equation}
    \label{eqn:proof_lemma:gaussian_integration_by_parts_v13}
    \begin{split}
        \int_{\bbR^d} \kappa \left( \bsx; t, \bftheta \right) \bftheta^{\top} \nabla_{\bsx} g (\bsx) \mathrm{d} \bsx = \ & - \int_{\bbR^d} \bftheta^{\top} \nabla_{\bsx} \kappa \left( \bsx; t, \bftheta \right) g (\bsx) \mathrm{d} \bsx \\
        \stackrel{\textnormal{(e)}}{=} \ &  \int_{\bbR^d} \partial_{t} \kappa \left( \bsx; t, \bftheta \right) g (\bsx) \mathrm{d} \bsx,
    \end{split}
\end{equation}
where the step (e) results from the equation \eqref{eqn:proof_lemma:gaussian_integration_by_parts_v2}. Due to the joint integrability of $\partial_{t} \kappa \left( \bsx; t, \bftheta \right) g (\bsx)$ and $\kappa \left( \bsx; t, \bftheta \right) \cdot \nabla_{\bsx}^{\top} g (\bsx) \bftheta$ in the joint variables $\left( \bsx, \bftheta \right)$, the Fubini-Tonelli theorem gives
\[
    \begin{split}
        \int_{\bbR^d} \partial_{t} p_{t} (\bsx) g (\bsx) \mathrm{d} \bsx = \ & \int_{\bbR^d} \left[ \int_{\bbB_{2}^{d} (M)} \partial_{t} \kappa \left( \bsx; t, \bftheta \right) g (\bsx) \pi \left( \mathrm{d} \bftheta \right) \right] \mathrm{d} \bsx \\
        = \ & \int_{\bbB_{2}^{d} (M)} \underbrace{\left[ \int_{\bbR^d} \partial_{t} \kappa \left( \bsx; t, \bftheta \right) g (\bsx) \mathrm{d} \bsx \right]}_{= \ \int_{\bbR^d} \kappa \left( \bsx; t, \bftheta \right) \nabla_{\bsx}^{\top} g (\bsx) \bftheta \mathrm{d} \bsx} \pi \left( \mathrm{d} \bftheta \right) \\
        \stackrel{\textnormal{(f)}}{=} \ & \int_{\bbR^d} \nabla_{\bsx}^{\top} g (\bsx) \underbrace{\left[ \int_{\bbB_{2}^{d} (M)} \bftheta \kappa \left( \bsx; t, \bftheta \right) \pi \left( \mathrm{d} \bftheta \right) \right]}_{= \ p_{t} (\bsx) \cdot \bsu_{t} (\bsx)} \mathrm{d} \bsx \\
        = \ & \int_{\bbR^d} p_{t} (\bsx) \cdot \nabla_{\bsx}^{\top} g (\bsx) \bsu_{t} (\bsx) \mathrm{d} \bsx,
    \end{split}
\]
completing the proof for the desired Gaussian integration by parts identity \eqref{eqn:lemma:gaussian_integration_by_parts_v2}, where the step (f) makes use of the equation \eqref{eqn:proof_lemma:gaussian_integration_by_parts_v13}.

\subsubsection{Likelihood-ratio growth}
\label{subsubsec:likelihood_ratio_growth}

By the part (\romannumeral 1) of Lemma \ref{lemma:interpolation_parameter_differentiation_density_envelopes}, we obtain the following bound on the likelihood ratio: for each fixed $T \in (0, +\infty)$,
\begin{equation}
    \label{eqn:proof_likelihood_ratio_growth_v1}
    \begin{split}
        \exp \left\{ - b_{T} (\bsx) \right\} \leq \rho_{t} (\bsx) \leq \exp \left\{ b_{T} (\bsx) \right\}, \qquad \left( t, \bsx \right) \in [-T, T] \times \bbR^d,
    \end{split}
\end{equation}
where
\[
    \begin{split}
        b_{T} (\bsx) := 2 TM \left\| \bsx \right\|_{2} + \frac{T^2 M^2}{2}, \qquad \bsx \in \bbR^d.
    \end{split}
\]
The inequality \eqref{eqn:proof_likelihood_ratio_growth_v1} can be equivalently interpreted as
\begin{equation}
    \label{eqn:proof_likelihood_ratio_growth_v2}
    \begin{split}
        \left| \log \rho_{t} (\bsx) \right| \leq b_{T} (\bsx), \qquad \left( t, \bsx \right) \in [-T, T] \times \bbR^d,
    \end{split}
\end{equation}

\noindent Let us now corroborate that the functions $\log \rho_t : \bbR^d \to \bbR$ and $\rho_{t}^{\alpha} : \bbR^d \to (0, +\infty)$, $\alpha \in \bbR$, satisfy the exponential growth condition \eqref{eqn:lemma:gaussian_integration_by_parts_v1} via the following lemma:

\begin{lemma}
\label{lemma:likelihood_ratio_exponential_growth_condition}
For every $\left( \alpha, t \right) \in \bbR \times \bbR$, it holds that
\begin{equation}
    \label{eqn:lemma:likelihood_ratio_exponential_growth_condition_v1}
    \begin{split}
        \nabla_{\bsx} \left( \rho_{t}^{\alpha} \right) (\bsx) = \alpha t \rho_{t}^{\alpha} (\bsx) \bfDelta_{t} (\bsx).
    \end{split}
\end{equation}
Now, fix any $T \in (0, +\infty)$. Then, the following conclusions hold:
\begin{enumerate} [label = (\roman*)]
    \item For any fixed $t \in \left[ -T, T \right]$, the function $\rho_{t}^{\alpha}: \bbR^d \to (0, +\infty)$ obeys the exponential growth condition \eqref{eqn:lemma:gaussian_integration_by_parts_v1} with the choice $\left( L, c \right) = \left( \left( 1 + 2 |\alpha| TM \right) \exp \left( \frac{| \alpha | T^2 M^2}{2} \right), 2 |\alpha| TM \right)$, i.e.,
    \begin{equation}
        \label{eqn:lemma:likelihood_ratio_exponential_growth_condition_v2}
        \begin{split}
            \left| \rho_{t}^{\alpha} (\bsx) \right| + \left\| \nabla_{\bsx} \left( \rho_{t}^{\alpha} \right) (\bsx) \right\|_{2} \leq \left( 1 + 2 |\alpha| TM \right) \exp \left( \frac{| \alpha | T^2 M^2}{2} \right) \exp \left( 2 |\alpha| TM \left\| \bsx \right\|_{2} \right), \qquad \bsx \in \bbR^d.
        \end{split}
    \end{equation}
    \item For each fixed $t \in [-T, T]$, the function $\log \rho_t : \bbR^d \to \bbR$ satisfies the exponential growth condition \eqref{eqn:lemma:gaussian_integration_by_parts_v1} with the choice $\left( L, c \right) = \left( 1 + 2 TM, \frac{T^2 M^2}{2}, 1 \right)$, i.e.,
    \begin{equation}
        \label{eqn:lemma:likelihood_ratio_exponential_growth_condition_v3}
        \begin{split}
            \left| \log \rho_{t} (\bsx) \right| + \left\| \nabla_{\bsx} \log \rho_{t} (\bsx) \right\|_{2} \leq \left( 1 + 2 TM + \frac{T^2 M^2}{2} \right) \exp \left( \left\| \bsx \right\|_{2} \right), \qquad \bsx \in \bbR^d.
        \end{split}
    \end{equation}
\end{enumerate}
\end{lemma}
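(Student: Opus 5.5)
The gradient identity comes first, because both growth bounds rest on it. By Lemma~\ref{lemma:interpolation_parameter_differentiation_density_envelopes}, $p_t$ and $q_t$ are $\calC^1$ in $\bsx$ and strictly positive (the lower envelope in part~(i) is positive). Hence $\log \rho_t = \log p_t - \log q_t$ is $\calC^1$, and the score identity \eqref{eqn:lemma:posterior_means_score_identities} gives $\nabla_{\bsx} \log \rho_t = t \bfDelta_t$. To justify that identity directly, divide \eqref{eqn:lemma:interpolation_parameter_differentiation_density_envelopes_v5} by $p_t$ (respectively $q_t$). This yields $-\bsx + t\bsu_t$, since the posterior mean is $\bsu_t(\bsx) = \int \bftheta\, \phi_d(\bsx - t\bftheta)\,\pi(\mathrm{d}\bftheta)/p_t(\bsx)$. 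Writing $\rho_t^\alpha = \exp(\alpha \log \rho_t)$ and applying the chain rule then gives
\[
\nabla_{\bsx}(\rho_t^\alpha) = \alpha \rho_t^\alpha \nabla_{\bsx} \log\rho_t = \alpha t \rho_t^\alpha \bfDelta_t,
\]
which is \eqref{eqn:lemma:likelihood_ratio_exponential_growth_condition_v1}.

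For part~(i), fix $t \in [-T,T]$. The likelihood-ratio envelope \eqref{eqn:proof_likelihood_ratio_growth_v1} gives $\rho_t^\alpha \le \exp\{|\alpha| b_T(\bsx)\} = \exp(|\alpha|T^2M^2/2)\exp(2|\alpha|TM\|\bsx\|_2)$, valid for either sign of $\alpha$. For the gradient, \eqref{eqn:lemma:likelihood_ratio_exponential_growth_condition_v1} together with $\|\bfDelta_t\|_2 \le 2M$ and $|t| \le T$ gives $\|\nabla(\rho_t^\alpha)\|_2 \le 2|\alpha| TM\, \rho_t^\alpha$. Adding the two bounds produces the factor $(1+2|\alpha|TM)$ times the same exponential, which is exactly \eqref{eqn:lemma:likelihood_ratio_exponential_growth_condition_v2}.

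For part~(ii), \eqref{eqn:proof_likelihood_ratio_growth_v2} gives $|\log\rho_t| \le 2TM\|\bsx\|_2 + T^2M^2/2$, and the gradient bound is $\|t\bfDelta_t\|_2 \le 2TM$. The sum is therefore at most $2TM(1+\|\bsx\|_2) + T^2M^2/2$. Using $1 + r \le e^r$ and $1 \le e^r$ for $r = \|\bsx\|_2 \ge 0$, this is at most $(2TM + T^2M^2/2)\,e^{\|\bsx\|_2}$, which is dominated by $(1 + 2TM + T^2M^2/2)\,e^{\|\bsx\|_2}$ as stated.

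I expect no step to be a real obstacle. The only point needing care is rigor in the score identity: positivity of $q_t$ and differentiability of the mixture integrals. Both are supplied by Lemma~\ref{lemma:interpolation_parameter_differentiation_density_envelopes}, so the rest is bookkeeping of constants.
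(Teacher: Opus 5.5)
Your proposal is correct and follows the paper's proof essentially step by step. You obtain the gradient formula from the chain rule and the score identity $\nabla_{\bsx}\log\rho_t = t\bfDelta_t$, and both growth bounds from the envelope $e^{-b_T(\bsx)}\le\rho_t(\bsx)\le e^{b_T(\bsx)}$ together with $\|\bfDelta_t\|_2\le 2M$ and $|t|\le T$. In part (ii) your grouping gives the slightly smaller constant $2TM+T^2M^2/2$, which is harmless since it is dominated by the stated constant.
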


\begin{proof} [Proof of Lemma \ref{lemma:likelihood_ratio_exponential_growth_condition}]
\indent It is straightforward to observe that $
\nabla_{\bsx} (1) (\bsx) = \bfzero_d$, and for any given $\alpha \in \bbR \setminus \{ 0 \}$,
\[
    \begin{split}
        \nabla_{\bsx} \left( \rho_{t}^{\alpha} \right) (\bsx) = \alpha \rho_{t}^{\alpha - 1} (\bsx) \cdot \nabla_{\bsx} \rho_{t} (\bsx) = \alpha \rho_{t}^{\alpha - 1} (\bsx) \cdot \rho_{t} (\bsx) \underbrace{\nabla_{\bsx} \log \rho_{t} (\bsx)}_{= \ t \bfDelta_{t} (\bsx)} \stackrel{\textnormal{(a)}}{=} \alpha t \rho_{t}^{\alpha} (\bsx) \bfDelta_{t} (\bsx),
    \end{split}
\]
where the step (a) follows due to the third piece from \eqref{eqn:lemma:posterior_means_score_identities}. Now, let us focus on establishing the desired exponential growth conditions for the functions $\rho_{t}^{\alpha} : \bbR^d \to (0, +\infty)$ and $\log \rho_t : \bbR^d \to \bbR$:
\medskip

\indent (\romannumeral 1) Utilizing the facts $|t| \leq T$ and $\left\| \bfDelta_{t} (\bsx) \right\|_{2} \leq 2M$ for $\bsx \in \bbR^d$, it follows from the identity \eqref{eqn:lemma:likelihood_ratio_exponential_growth_condition_v1} that
\[
    \begin{split}
        \left| \rho_{t}^{\alpha} (\bsx) \right| + \left\| \nabla_{\bsx} \left( \rho_{t}^{\alpha} \right) (\bsx) \right\|_{2} \leq \ & \left( 1 + 2 \left| \alpha \right| TM \right) \rho_{t}^{\alpha} (\bsx) \\
        \stackrel{\textnormal{(b)}}{\leq} \ & \left( 1 + 2 \left| \alpha \right| TM \right) \exp \left\{ \left| \alpha \right| b_{T} (\bsx) \right\} \\
        = \ & \left( 1 + 2 |\alpha| TM \right) \exp \left( \frac{| \alpha | T^2 M^2}{2} \right) \exp \left( 2 |\alpha| TM \left\| \bsx \right\|_{2} \right)
    \end{split}
\]
for all $\bsx \in \bbR^d$, where the step (b) is a consequence of the bound \eqref{eqn:proof_likelihood_ratio_growth_v1} on the likelihood ratio. This completes the proof of the part (\romannumeral 1) from Lemma \ref{lemma:likelihood_ratio_exponential_growth_condition}.
\medskip

\indent (\romannumeral 2) By the third piece from \eqref{eqn:lemma:posterior_means_score_identities} alongside with the facts $|t| \leq T$ and $\left\| \bfDelta_{t} (\bsx) \right\|_{2} \leq 2M$ for $\bsx \in \bbR^d$, we obtain
\[
    \begin{split}
        \left| \log \rho_{t} (\bsx) \right| + \left\| \nabla_{\bsx} \log \rho_{t} (\bsx) \right\|_{2} \stackrel{\textnormal{(c)}}{\leq} \ & b_{T} (\bsx) + 2TM \\
        \leq \ & \left( 1 + 2TM + \frac{T^2 M^2}{2} \right) \left( 1 + \left\| \bsx \right\|_{2} \right) \\
        \leq \ & \left( 1 + 2TM + \frac{T^2 M^2}{2} \right) \exp \left( \left\| \bsx \right\|_{2} \right)
    \end{split}
\]
for every $\bsx \in \bbR^d$, where the step (c) comes from the log-likelihood ratio bound \eqref{eqn:proof_likelihood_ratio_growth_v2}, concluding the proof of the part (\romannumeral 2) of Lemma \ref{lemma:likelihood_ratio_exponential_growth_condition}.
\end{proof}

\subsubsection{Finiteness and regularity of the divergence curves}
\label{subsubsec:finiteness_regularity_divergence_curves}

\paragraph{Differentiation of the normalization.}
It holds that
\begin{equation}
    \label{eqn:proof_finiteness_regularity_divergence_curves_v1}
    \begin{split}
        \int_{\bbR^d} \partial_{t} p_{t} (\bsx) \mathrm{d} \bsx \stackrel{\textnormal{(a)}}{=} \partial_{t} \underbrace{\int_{\bbR^d} p_{t} (\bsx) \mathrm{d} \bsx}_{= \ 1} = 0, \qquad \int_{\bbR^d} \partial_{t} q_{t} (\bsx) \mathrm{d} \bsx \stackrel{\textnormal{(b)}}{=} \partial_{t} \underbrace{\int_{\bbR^d} q_{t} (\bsx) \mathrm{d} \bsx}_{= \ 1} = 0.
    \end{split}
\end{equation}
In order to derive the identity \eqref{eqn:proof_finiteness_regularity_divergence_curves_v1}, it suffices to provide an analytic justification for interchanging the order of the differentiation and integration for the steps (a) and (b). Given any compact interval $K \subseteq \bbR$, choose a sufficiently large $T \in (0, +\infty)$ so that $K \subseteq [-T, T]$. Combining two pieces \eqref{eqn:proof_lemma:interpolation_parameter_differentiation_density_envelopes_v1} and \eqref{eqn:proof_lemma:interpolation_parameter_differentiation_density_envelopes_v2} from Appendix \ref{subsubsec:density_envelopes_differentiation} together, one has
\begin{equation}
    \label{eqn:proof_finiteness_regularity_divergence_curves_v2}
    \begin{split}
        \phi_d \left( \bsx - t \bftheta \right) \leq \phi_d (\bsx) \exp \left( TM \left\| \bsx \right\|_{2} \right), \qquad \left( t, \bsx \right) \in K \times \bbR^d.
    \end{split}
\end{equation}
Also, by using the fact $\partial_t \phi_d \left( \bsx - t \bftheta \right) = \left( \bftheta^{\top} \bsx - t \left\| \bftheta \right\|_{2}^2 \right) \phi_d \left( \bsx - t \bftheta \right)$, the inequality \eqref{eqn:proof_finiteness_regularity_divergence_curves_v2} implies
\begin{equation}
    \label{eqn:proof_finiteness_regularity_divergence_curves_v3}
    \begin{split}
        \left| \partial_{t} \phi_d \left( \bsx - t \bftheta \right) \right| \leq \left( M \left\| \bsx \right\|_{2} + T M^2 \right) \phi_d (\bsx) \exp \left( TM \left\| \bsx \right\|_{2} \right), \qquad \left( t, \bsx \right) \in K \times \bbR^d.
    \end{split}
\end{equation}
Since Lemma \ref{lemma:gaussian_domination} gives
\begin{equation}
    \label{eqn:proof_finiteness_regularity_divergence_curves_v4}
    \begin{split}
        \int_{\bbR^d} \phi_d (\bsx) \exp \left( TM \left\| \bsx \right\|_{2} \right) \mathrm{d} \bsx \leq \ & \calS \left( TM; d \right), \\
        \int_{\bbR^d} \phi_d (\bsx) \left( 1 + \left\| \bsx \right\|_{2} \right) \exp \left( TM \left\| \bsx \right\|_{2} \right) \mathrm{d} \bsx \leq \ & \calK \left( TM; d, 1 \right), \\
        \int_{\bbR^d} \phi_d (\bsx) \left( 1 + \left\| \bsx \right\|_{2} \right)^2 \exp \left( TM \left\| \bsx \right\|_{2} \right) \mathrm{d} \bsx \leq \ & \calK \left( TM; d, 2 \right),
    \end{split}
\end{equation}
we find that the right-hand sides from the inequalities \eqref{eqn:proof_finiteness_regularity_divergence_curves_v2} and \eqref{eqn:proof_finiteness_regularity_divergence_curves_v3} are $\mathrm{Leb}_{d}$-integrable functions with respect to $\bsx$ and they are independent of $\left( t, \bftheta \right)$. Thus, one can invoke Lemma \ref{lemma:dominated_differentiation} to justify the steps (a) and (b), completing the proof of the desired identity \eqref{eqn:proof_finiteness_regularity_divergence_curves_v1}.

\paragraph{Finiteness and $\calC^1$-regularity of $J: \bbR \to [0, +\infty]$.}
To establish the finiteness of the function $J: \bbR \to [0, +\infty]$, it suffices to show that
\begin{equation}
    \label{eqn:proof_finiteness_regularity_divergence_curves_v5}
    \begin{split}
        \int_{\bbR^d} \left| \calJ_{0} \left( t, \bsx \right) \right| \mathrm{d} \bsx < +\infty, \qquad \int_{\bbR^d} \left| \calJ_{1} \left( t, \bsx \right) \right| \mathrm{d} \bsx < +\infty,
    \end{split}
\end{equation}
for every fixed $t \in \bbR$. Fix any $t \in \bbR$ and choose $T \in (0, +\infty)$ so that $|t| < T$. Then, the bound \eqref{eqn:proof_likelihood_ratio_growth_v2} and the part (\romannumeral 1) of Lemma \ref{lemma:interpolation_parameter_differentiation_density_envelopes} yields
\begin{equation}
    \label{eqn:proof_finiteness_regularity_divergence_curves_v6}
    \begin{split}
        \left| \calJ_{0} \left( t, \bsx \right) \right| \leq \ & b_{T} (\bsx) p_t (\bsx) \\
        \leq \ & b_{T} (\bsx) \cdot \phi_{d} (\bsx) \exp \left( TM \left\| \bsx \right\|_{2} \right) \\
        \leq \ & \frac{T^2 M^2}{2} \phi_{d} (\bsx) \exp \left( TM \left\| \bsx \right\|_{2} \right) + 2 TM \phi_{d} (\bsx) \left( 1 + \left\| \bsx \right\|_{2} \right) \exp \left( TM \left\| \bsx \right\|_{2} \right).
    \end{split}
\end{equation}
The observations from \eqref{eqn:proof_finiteness_regularity_divergence_curves_v4} imply that the dominating integrands in \eqref{eqn:proof_finiteness_regularity_divergence_curves_v6} are $\mathrm{Leb}_{d}$-integrable, thereby
\[
    \begin{split}
        \int_{\bbR^d} \left| \calJ_{0} \left( t, \bsx \right) \right| \mathrm{d} \bsx < +\infty
    \end{split}
\]
as desired in the first claim of \eqref{eqn:proof_finiteness_regularity_divergence_curves_v5}. We can prove the remaining claim from \eqref{eqn:proof_finiteness_regularity_divergence_curves_v5} by applying exactly the same arguments.

\indent We next establish the $\calC^{1}$-regularity of $J: \bbR \to [0, +\infty)$. To this end, let us consider any compact interval $K \subseteq \bbR$ and choose a sufficiently large $T \in (0, +\infty)$ so that $K \subseteq [-T, T]$. Then, the bound \eqref{eqn:proof_finiteness_regularity_divergence_curves_v6} directly yields
\[
    \begin{split}
        \left| \calJ_{0} \left( t, \bsx \right) \right| \leq P_{\calJ_0}^{K} (\bsx) := \frac{T^2 M^2}{2} \phi_{d} (\bsx) \exp \left( TM \left\| \bsx \right\|_{2} \right) + 2 TM \phi_{d} (\bsx) \left( 1 + \left\| \bsx \right\|_{2} \right) \exp \left( TM \left\| \bsx \right\|_{2} \right) 
    \end{split}
\]
for all $\left( t, \bsx \right) \in K \times \bbR^d$. We now inspect the derivative of $H_{0} \left( t, \bsx \right)$ with respect to the interpolation parameter. One can easily reveal that
\begin{equation}
    \label{eqn:proof_finiteness_regularity_divergence_curves_v7}
    \begin{split}
        \partial_t \calJ_{0} \left( t, \bsx \right) = \partial_{t} p_t (\bsx) \left\{ 1 + \log \rho_{t} (\bsx) \right\} - \rho_{t} (\bsx) \partial_{t} q_t (\bsx).
    \end{split}
\end{equation}
Therefore, it follows that
\[
    \begin{split}
        \left| \partial_t \calJ_{0} \left( t, \bsx \right) \right| \stackrel{\textnormal{(c)}}{\leq} \ & \left| \partial_{t} p_t (\bsx) \right| \left\{ 1 + b_{T} (\bsx) \right\} + b_{T} (\bsx) \left| \partial_{t} q_t (\bsx) \right| \\
        \stackrel{\textnormal{(d)}}{\leq} \ & a_{T} (\bsx) \left\{ 1 + b_{T} (\bsx) \right\} p_t (\bsx) + a_{T} (\bsx) b_{T} (\bsx) q_t (\bsx) \\
        \stackrel{\textnormal{(e)}}{\leq} \ & 2 a_{T} (\bsx) \left\{ 1 + b_{T} (\bsx) \right\} \phi_{d} (\bsx) \exp \left( TM \left\| \bsx \right\|_{2} \right) \\
        \leq \ & Q_{\calJ_0}^{K} (\bsx), 
    \end{split}
\]
where
\[
    \begin{split}
        Q_{\calJ_0}^{K} (\bsx) := \ & T M^2 \left( 1 + \frac{T^2 M^2}{2} \right) \phi_{d} (\bsx) \exp \left( TM \left\| \bsx \right\|_{2} \right) \\
        &+ \left\{ 2 T^2 M^3 + M \left( 1 + \frac{T^2 M^2}{2} \right) \right\} \phi_{d} (\bsx) \left( 1 + \left\| \bsx \right\|_{2} \right) \exp \left( TM \left\| \bsx \right\|_{2} \right) \\
        &+ 2 T M^2 \phi_{d} (\bsx) \left( 1 + \left\| \bsx \right\|_{2} \right)^2 \exp \left( TM \left\| \bsx \right\|_{2} \right),
    \end{split}
\]
the step (c) results from the log-likelihood ratio bound \eqref{eqn:proof_likelihood_ratio_growth_v2}, the step (d) makes use of the inequality \eqref{eqn:lemma:interpolation_parameter_differentiation_density_envelopes_v4} from Lemma \ref{lemma:interpolation_parameter_differentiation_density_envelopes}, and the step (e) holds due to the part (\romannumeral 1) of Lemma \ref{lemma:interpolation_parameter_differentiation_density_envelopes}. Here, it follows from the observations from \eqref{eqn:proof_finiteness_regularity_divergence_curves_v4} that
\[
    \begin{split}
        \int_{\bbR^d} \left\{ P_{\calJ_0}^{K} (\bsx) + Q_{\calJ_0}^{K} (\bsx) \right\} \mathrm{d} \bsx < +\infty.
    \end{split}
\]
Hence, one can conclude that the function $t \in \bbR \mapsto \textsf{KL} \left( p_t \left\| q_t \right. \right) \in [0, +\infty)$ belongs to the class $\calC^{1} (\bbR)$ by virtue of Lemma \ref{lemma:dominated_differentiation}. Using exactly the same arguments, it is easy to reveal that the function $t \in \bbR \mapsto \textsf{KL} \left( q_t \left\| p_t \right. \right) \in [0, +\infty)$ also belongs to the class $\calC^{1} (\bbR)$, which proves the $\calC^{1}$-regularity of the function $J: \bbR \to [0, +\infty)$.

\paragraph{Finiteness and $\calC^1$-regularity of $C: \bbR \to [0, +\infty]$.}
To verify finiteness, fix any $t \in \bbR$ and select $T \in (0, +\infty)$ such that $|t| < T$. For the integrand defined above, we have
\begin{equation}
    \label{eqn:proof_finiteness_regularity_divergence_curves_v8}
    \begin{split}
       \calC \left( t, \bsx \right) = \ & p_t (\bsx) \cdot \rho_{t} (\bsx) \\
        \stackrel{\textnormal{(f)}}{\leq} \ & \phi_d (\bsx) \exp \left( TM \left\| \bsx \right\|_{2} \right) \cdot \exp \left\{ b_{T} (\bsx) \right\} \\
        = \ & \exp \left( \frac{T^2 M^2}{2} \right) \phi_d (\bsx) \exp \left( 3 TM \left\| \bsx \right\|_{2} \right)
    \end{split}
\end{equation}
for every $\bsx \in \bbR^d$, where the step (f) invokes the part (\romannumeral 1) of Lemma \ref{lemma:interpolation_parameter_differentiation_density_envelopes} as well as the likelihood ratio bound \eqref{eqn:proof_likelihood_ratio_growth_v1}. Therefore, we have
\[
    \begin{split}
        0 \leq \ & C (t) = \int_{\bbR^d} \calC \left( t, \bsx \right) \mathrm{d} \bsx - 1 \\
        \leq \ & \exp \left( \frac{T^2 M^2}{2} \right) \int_{\bbR^d} \phi_d (\bsx) \exp \left( 3 TM \left\| \bsx \right\|_{2} \right) \mathrm{d} \bsx - 1 \\
        \stackrel{\textnormal{(g)}}{\leq} \ & \exp \left( \frac{T^2 M^2}{2} \right) \calS \left( 3 TM; d \right) - 1 < +\infty,
    \end{split}
\]
demonstrating the finiteness of the function $C$, where the step (g) results from Lemma \ref{lemma:gaussian_domination}.

\indent Now, it is time to establish the $\calC^1$-regularity of the function $C : \bbR \to [0, +\infty)$. Towards this end, let us consider any compact interval $K \subseteq \bbR$ and choose a sufficiently large $T \in (0, +\infty)$ satisfying $K \subseteq [-T, T]$. By using the same arguments as those in the bound \eqref{eqn:proof_finiteness_regularity_divergence_curves_v8}, one has
\[
    \begin{split}
        \left| \calC \left( t, \bsx \right) \right| \leq P_{\calC}^{K} (\bsx) := \exp \left( \frac{T^2 M^2}{2} \right) \phi_d (\bsx) \exp \left( 3 TM \left\| \bsx \right\|_{2} \right), \qquad \left( t, \bsx \right) \in K \times \bbR^d.
    \end{split}
\]
Thus, it follows for every $\left( t, \bsx \right) \in K \times \bbR^d$ that
\[
    \begin{split}
        \left| \partial_{t} \calC \left( t, \bsx \right) \right| \leq \ & 2 \left| \partial_{t} p_t (\bsx) \right| \cdot \rho_{t} (\bsx) + \left|  \partial_{t} q_t (\bsx) \right| \cdot \rho_{t}^2 (\bsx) \\
        \stackrel{\textnormal{(h)}}{\leq} \ & 2 a_{T} (\bsx) p_t (\bsx) \rho_{t} (\bsx) + a_{T} (\bsx) q_t (\bsx) \rho_{t}^2 (\bsx) \\
        \stackrel{\textnormal{(i)}}{\leq} \ & 2 a_{T} (\bsx) \phi_{d} (\bsx) \exp \left( TM \left\| \bsx \right\|_{2} \right) \cdot \exp \left\{ b_{T} (\bsx) \right\} + a_{T} (\bsx) \phi_{d} (\bsx) \exp \left( TM \left\| \bsx \right\|_{2} \right) \cdot \exp \left\{ 2 b_{T} (\bsx) \right\} \\
        \leq \ & 4 a_{T} (\bsx) \phi_{d} (\bsx) \exp \left( TM \left\| \bsx \right\|_{2} \right) \cdot \exp \left\{ 2 b_{T} (\bsx) \right\} \\
        \leq \ & Q_{\calC}^{K} (\bsx),
    \end{split}
\]
where
\[
    \begin{split}
        Q_{\calC}^{K} (\bsx) := 4 T M^2 \exp \left( T^2 M^2 \right) \phi_{d} (\bsx) \exp \left( 5 TM \left\| \bsx \right\|_{2} \right) + 4 M \exp \left( T^2 M^2 \right) \phi_{d} (\bsx) \left( 1 + \left\| \bsx \right\|_{2} \right) \exp \left( 5 TM \left\| \bsx \right\|_{2} \right).
    \end{split}
\]
Here, the steps (h) and (i) are consequences from the inequality \eqref{eqn:lemma:interpolation_parameter_differentiation_density_envelopes_v4} and the part (\romannumeral 1) from Lemma \ref{lemma:interpolation_parameter_differentiation_density_envelopes}, respectively. Because Lemma \ref{lemma:gaussian_domination} yields the absolute integrability of the functions $P_{\calC}^{K}: \bbR^d \to [0, +\infty)$ and $Q_{\calC}^{K}: \bbR^d \to [0, +\infty)$:
\[
    \begin{split}
        &\int_{\bbR^d} \left\{ P_{\calC}^{K} (\bsx) + Q_{\calC}^{K} (\bsx) \right\} \mathrm{d} \bsx \\
        \leq \ & \exp \left( \frac{T^2 M^2}{2} \right) \calS \left( 3 TM; d \right) + 4 TM^2 \exp \left( T^2 M^2 \right) \calS \left( 5 TM; d \right) + 4M \exp \left( T^2 M^2 \right) \calK \left( 5 TM; d, 1 \right) < +\infty,
    \end{split}
\]
we obtain the desired $\calC^1$-regularity of the function $C : \bbR \to [0, +\infty)$ by applying Lemma \ref{lemma:dominated_differentiation}.

\paragraph{$\calC^1$-regularity of the Hellinger affinity and $D : \bbR \to [0, 2]$.}
Now, fix any compact interval $K \subseteq \bbR$ and select a sufficiently large $T \in (0, +\infty)$ with $K \subseteq [-T, T]$. One can easily observe from the part (\romannumeral 1) of Lemma \ref{lemma:interpolation_parameter_differentiation_density_envelopes} that
\[
    \begin{split}
        \left| \calA \left( t, \bsx \right) \right| \leq P_{\calA}^{K} (\bsx) := \phi_d (\bsx) \exp \left( TM \left\| \bsx \right\|_{2} \right), \qquad \left( t, \bsx \right) \in K \times \bbR^d.
    \end{split}
\]
Using the pointwise derivative computed above, we obtain a $t$-uniform upper bound on $\left| \partial_{t} \calA \left( t, \bsx \right) \right|$:
\[
    \begin{split}
        \left| \partial_{t} \calA \left( t, \bsx \right) \right| \leq \ & \frac{1}{2 \sqrt{\rho_{t} (\bsx)}} \left| \partial_{t} p_t (\bsx) \right| + \frac{\sqrt{\rho_{t} (\bsx)}}{2} \left| \partial_{t} q_t (\bsx) \right| \\
        \stackrel{\textnormal{(j)}}{\leq} \ & \frac{1}{2 \sqrt{\rho_{t} (\bsx)}} \cdot a_{T} (\bsx) p_t (\bsx) + \frac{\sqrt{\rho_{t} (\bsx)}}{2} \cdot a_{T} (\bsx) q_t (\bsx) \\
        \stackrel{\textnormal{(k)}}{\leq} \ & \frac{1}{2} a_{T} (\bsx) \phi_{d} (\bsx) \exp \left( TM \left\| \bsx \right\|_{2} \right) \left\{ \sqrt{\rho_{t} (\bsx)} + \frac{1}{\sqrt{\rho_{t} (\bsx)}} \right\} \\
        \stackrel{\textnormal{(l)}}{\leq} \ & a_{T} (\bsx) \exp \left\{ \frac{1}{2} b_{T} (\bsx) \right\} \phi_{d} (\bsx) \exp \left( TM \left\| \bsx \right\|_{2} \right) \\
        \leq \ & Q_{\calA}^{K} (\bsx),
    \end{split}
\]
where
\[
    \begin{split}
        Q_{\calA}^{K} (\bsx) := \ & T M^2 \exp \left( \frac{T^2 M^2}{4} \right) \phi_d (\bsx) \exp \left( 2 TM \left\| \bsx \right\|_{2} \right) \\
        &+ M \exp \left( \frac{T^2 M^2}{4} \right) \phi_d (\bsx) \left( 1 + \left\| \bsx \right\|_{2} \right) \exp \left( 2 TM \left\| \bsx \right\|_{2} \right).
    \end{split}
\]
Here, the steps (j) and (k) leverage the inequality \eqref{eqn:lemma:interpolation_parameter_differentiation_density_envelopes_v4} and the part (\romannumeral 1) from Lemma \ref{lemma:interpolation_parameter_differentiation_density_envelopes}, respectively, and the step (l) follows by the bound \eqref{eqn:proof_likelihood_ratio_growth_v1} on the likelihood ratio. Since Lemma \ref{lemma:dominated_differentiation} imply
\[
    \begin{split}
        &\int_{\bbR^d} \left\{ P_{\calA}^{K} (\bsx) + Q_{\calA}^{K} (\bsx) \right\} \mathrm{d} \bsx \\
        \leq \ & \calS \left( TM; d \right) + T M^2 \exp \left( \frac{T^2 M^2}{4} \right) \calS \left( 2 TM; d \right) + M \exp \left( \frac{T^2 M^2}{4} \right) \calK \left( 2 TM; d , 1 \right) < +\infty,
    \end{split}
\]
we can apply Lemma \ref{lemma:dominated_differentiation} to conclude that $A \in \calC^1 (\bbR)$. From the relation $D = 2 - 2A$, the $\calC^1$-regularity of $D: \bbR \to [0, 2]$ immediately follows from the $\calC^1$-regularity of $A: \bbR \to [0, 1]$. This completes the analytic justifications for Appendix \ref{subsec:proof_divergence_derivative_formulas}.

\end{document}